\documentclass[reqno, 11pt]{amsart}
\linespread{1.0}
\usepackage[top=4cm, bottom=3cm, left=3.2cm, right=3.2cm]{geometry}

\usepackage{amssymb,amsmath}
\usepackage{enumerate,color}

\newtheorem{thm}{Theorem}

\newtheorem{lem}{Lemma}

\allowdisplaybreaks

\newcommand{\dint}{\displaystyle\int}

\numberwithin{equation}{section} \numberwithin{lem}{section}
\numberwithin{thm}{section} \numberwithin{prop}{section}
\numberwithin{cor}{section} \numberwithin{rem}{section}

\begin{document}
\title{Uniform in time $L^{\infty}$-estimates for nonlinear aggregation-diffusion equations}
\author{Jose A. Carrillo$^{\,2}$, Jinhuan Wang$^{\,1}$}{\thanks{Corresponding author: Jinhuan Wang}}

\maketitle
\begin{center}
{\footnotesize
1-School of Mathematics, Liaoning University, Shenyang, 110036, P. R. China \\
 email: wjh800415@163.com\\
 \smallskip
2-Department of Mathematics, Imperial College London, South Kensington Campus, London SW7 2AZ, UK.
email: carrillo@imperial.ac.uk
}
\end{center}

\date{}
\begin{abstract}
We derive uniform in time $L^\infty$-bound for solutions to an aggregation-diffusion model with attractive-repulsive potentials or fully attractive potentials. We analyze two cases: either the repulsive nonlocal term dominates over the attractive part, or the diffusion term dominates over the fully attractive nonlocal part. When the repulsive part of the potential has a weaker singularity ($2-n\leq B<A\leq2$), we use the classical approach by the Sobolev and Young inequalities together with differential iterative inequalities to prove that solutions have the uniform in time $L^{\infty}$-bound. When the repulsive part of the potential has a stronger singularity ($-n<B<2-n\leq A\leq 2$), we show the uniform bounds by utilizing properties of fractional operators. We also show uniform bounds in the purely attractive case $2-n\leq A\leq 2$ within the diffusion dominated regime.
\end{abstract}

{\small {\bf Keywords:} aggregation-diffusion equations, global in time uniform estimates, Stroock-Varopoulos inequality.}

\section{Introduction}
In this paper, we analyse the Cauchy problem for the aggregation-diffusion equation
\begin{eqnarray}\label{model}
\left\{
\begin{array}{llll}
\smallskip
&\rho_t=\Delta \rho^{m} +{\rm div}(\rho\nabla (U\ast\rho)),&& x\in \mathbb R^n,~t> 0,\\
&\rho(x,0)=\rho_{0}(x), && x\in \mathbb R^n,
\end{array}
\right.
\end{eqnarray}
where $n\geq 2$, the diffusion exponent $m>1-\frac{2}{n}$, and the initial data $\rho_0(x)\in L^1_+(\mathbb R^n)\cap L^{\infty}(\mathbb R^n)$. The unknown function $\rho(x,t)$ represents the density of individuals interacting pairwise via the potential
$$
U(x)=\frac{|x|^A}{A}-\lambda\frac{|x|^B}{B}, \quad 2\geq A>B>-n\, , \, \lambda \geq0 \,.
$$
The interaction potential $U$ is attractive-repulsive for $\lambda>0$ and a fully attractive for $\lambda=0$. Here, we use the convention $\frac{|x|^0}{0}:=\log |x|$. These models arise as generalizations of Keller-Segel models for chemotaxis, and they appear as macroscopic models of collective behavior, see \cite{KeSe70,JaLu92,BlaDoPe06,BCM,BCL,ToBeLe06} and the references therein. We will restrict to two different cases in this work that depending on the singularity of the potential we classify as: weakly singular when $2-n\leq B<A\leq2$ with $\lambda\geq 0$, and strongly singular when $-n<B<2-n\leq A\leq 2$ with $\lambda>0$.

There is an extensive literature for local-in-time well-posedness results in these models when the interaction potential $U$ is chosen as the attractive Newtonian potential ($\lambda=0$, $A=2-n$), we refer to \cite{BL,BCL,Sugi1,Sugi2,Sugi3} and the references therein. Under the condition $m>1-\frac{2}{n}$, the existence of local-in-time weak solutions $\rho(x,t)\in L^\infty((0,T),L^1_+(\mathbb R^n)\cap L^{\infty}(\mathbb R^n))$ to (\ref{model}) conserving the mass, i.e.,
\begin{eqnarray}\label{M0}
\dint_{\mathbb{R}^n}\rho(x)\,dx=\dint_{\mathbb{R}^n}\rho_0(x)\,dx=:M_0\,,
\end{eqnarray}
is ensured. This result was proven in \cite{BL} only for the attractive Newtonian potential. However, the generalization of their argument to the ranges of weakly singular interaction potentials treated below is not difficult. The case of strongly singular potentials is more cumbersome but the techniques developed in \cite{CV,CVS13} should suffice. In this paper, we will assume the local-in-time well-posedness of these models, while focusing on the derivation of a-priori uniform in time estimates of the $L^p$-norms for the solutions for all $1\leq p\leq\infty$ allowing to extend the local-in-time results to global-in-time weak solutions. We will develop a full well-posedness theory for a general class of aggregation-diffusion models elsewhere.

In our present case, only stationary and equilibrium solutions have been deeply studied in \cite{CCH1,CCH2} for the fully attractive case $\lambda=0$ and the full range of exponents for $A$. The nonlinear diffusion term models very localized repulsion between individuals while the nonlocal potential takes into account non-local repulsion effects at short distances while attractive at long distances. The balance between repulsive and attractive terms determines if the density remains finite for all times uniformly or not. The exact balance in case $\lambda=0$ is given by $m=1-\tfrac{A}{n}$, called the fair-competition regime, as proven in \cite{CCH1,CCH2}. In the fair-competition regime solutions may blow-up in finite time for large enough mass $M$. This is the case for the classical Keller-Segel model ($n=2$, $m=1$, $A=0$) exhibiting the celebrated critical mass phenomena as well as for the full fair-competition regime $m=1-\tfrac{A}{n}$ with $-n<A\leq 0$. However, for the range of the parameters $m>1-\tfrac{A}{n}$, $\lambda=0$, and $-n<A\leq 0$, called the diffusion-dominated case, the existence of equilbrium solutions for all masses $M>0$ has been recently proven in \cite{CHMV}. We remind the reader that uniform in time estimates in $L^\infty$ where proven in \cite{CaCa06,Kowalczyk04} for $m>1$ for the attractive Newtonian case in two dimensions ($n=2$, $\lambda=0$, $A=0$). We also refer to \cite{CHVY} where solutions are shown to converge towards a unique equilibrium profile for diffusion-dominated regimes in 2d.

In this work, we find regimes for the general aggregation-diffusion model \eqref{model} with power-law potentials, in which there is a counterbalance due to repulsion (local or nonlocal) of the aggregation term given by the attractive $A$-term in $U$, leading to uniform $L^\infty$-bounds. The asymptotic behavior in these cases is not clarified yet, since it could be given by equilibrium solutions, see the ranges covered in \cite{CHMV}, or all the solutions may convergence to zero, quite likely in cases where the attraction is not so strong and maybe overcome by the combined effect of the nonlocal repulsion and the nonlinear diffusion term. The asymptotic behavior will also be discussed elsewhere.

The strategy of proof for all the theorems in this work can be summarized as follows. The first step is that for any fixed $p>1$, we need to obtain estimates on the $L^{p}$-norm of the solution uniformly in time with the sharpest constants possible. In the second step, we derive an important iterative differential inequality with uniform bounds in the constants involved with respect to the $L^p$-exponent. Finally, the third step is to obtain the uniform $L^{\infty}$-bound of the solution by utilizing a bootstrap iterative technique. This method follows a similar strategy to \cite{LW} but with different ingredients in each of the cases described below making them essentially different to the result in \cite{LW}. The next section treats the weak singularity case in which we take advantage of the repulsive nonlocal part to obtain the best estimates on the nonlocal term, the nonlinear diffusion term plays a less important role and it is used in order to get the regularization procedure. We also show in the purely attractive case, $\lambda=0$, that the nonlinear diffusion is able to overcome the nonlocal attraction in the diffusion-dominated case, i.e. for $m>1-\tfrac{A}{n}$. The final section deals with the strong singularity case in which we quantify the contribution of the repulsive nonlocal term by means of fractional Sobolev inequalities proved in \cite{CT}, and thus, we recover a similar result as in the weak singularity case.


\section{Uniform $L^{\infty}$-bound for the weak singularity case}
In this section, we prove a-priori uniform $L^{\infty}$-bounds for solutions to the model (\ref{model}) in the attractive-repulsive case $\lambda>0$ with $2-n\leq B<A\leq2$ and the purely attractive case $\lambda=0$ with $2-n\leq A\leq2$. For simplicity, whenever $\lambda>0$, we take $\lambda=1$ in the proofs of this section. In fact, we subdivide for $\lambda>0$ in two subcases to discuss the properties: (i) the case $2-n< B<A\leq2$, (ii) the case $2-n= B<A\leq2$. For the subcase (i), we will make use of the Sobolev inequality and the Young inequality together with interpolation inequalities (Step 1) to deduce the differential iterative relation with uniform constants in the iteration exponents (Step 2). The uniform $L^{\infty}$-bounds will be obtained as a consequence of these iterative inequalities (Step 3). In the subcase (ii), we can reduce to the subcase (i) via rearrangement inequalities. Finally for $\lambda=0$, the iterative inequalities need the additional use of the Hardy-Littlewood-Sobolev inequality.

\subsection{The case $2-n< B<A\leq2$ and $\lambda>0$}
We can compute the Laplacian of the potential that in the distributional sense gives
\begin{eqnarray}
\Delta U(x)=(A-2+n)|x|^{A-2}-(B-2+n)|x|^{B-2}=:f(|x|).\label{caseAxiao2}
\end{eqnarray}
The presence of the repulsive part of the potential allows for the following fact that strongly prevents concentrations at the origin.

\begin{lem}
If $2-n<B<A\leq2$, and $f(|x|)$ is defined in \eqref{caseAxiao2}, then there exists a zero point $r_0>0$ of $f(|x|)$ such that
\begin{eqnarray}
&& f(|x|)<0, \mbox{ for any } 0<|x|<r_0,\label{fxiao0}\\
&& f(|x|)\geq 0, \mbox{ for any } |x|\geq r_0.\label{fda0}
\end{eqnarray}
\end{lem}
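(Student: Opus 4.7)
The plan is to factor out the common minimum power and reduce the sign analysis of $f$ to that of a strictly monotone one-dimensional auxiliary function. Writing $r=|x|>0$ and pulling $r^{B-2}$ out in front,
\begin{equation*}
f(r) = r^{B-2}\bigl[(A-2+n)\,r^{A-B} - (B-2+n)\bigr] =: r^{B-2}\,g(r).
\end{equation*}
Since $r^{B-2}>0$ on $(0,\infty)$, the sign of $f$ coincides with the sign of $g$, and it is enough to locate the unique zero of $g$ and record its sign to the left and right.

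Next I would verify that $g$ has exactly the shape this reduction requires under the hypothesis $2-n<B<A\le 2$. Both coefficients $A-2+n$ and $B-2+n$ are strictly positive (this is precisely where the assumption $B>2-n$ is used), and the exponent $A-B$ is strictly positive. Hence $g$ is continuous and strictly increasing on $(0,\infty)$, with $\lim_{r\to 0^+}g(r)=-(B-2+n)<0$ and $g(r)\to+\infty$ as $r\to\infty$. The intermediate value theorem together with strict monotonicity then gives a unique $r_0>0$ with $g(r_0)=0$, $g<0$ on $(0,r_0)$, and $g>0$ on $(r_0,\infty)$. Multiplying through by the strictly positive factor $r^{B-2}$ transports these sign properties back to $f$, delivering \eqref{fxiao0} on $(0,r_0)$ and \eqref{fda0} on $[r_0,\infty)$. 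As a byproduct, the explicit threshold
\begin{equation*}
r_0 = \Bigl(\tfrac{B-2+n}{A-2+n}\Bigr)^{1/(A-B)}
\end{equation*}
falls out directly from solving $g(r_0)=0$, which may be useful in later quantitative estimates.

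There is no real obstacle in this argument; it is simply a sign analysis of a scalar function. The only conceptual point worth flagging is that the strict inequality $B>2-n$ enters in an essential way: it is what makes the coefficient $B-2+n$ strictly positive, and hence what forces $g(0^+)<0$ and produces the genuine sign change of $f$. In the borderline case $B=2-n$ (subcase (ii) in the paper) this coefficient would vanish and $f$ would retain a definite sign, which is why that case is handled separately via rearrangement rather than by the present direct calculation.
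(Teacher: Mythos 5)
Your proof is correct and follows essentially the same route as the paper: factor out the positive prefactor $|x|^{B-2}$, observe that the remaining bracket $(A-2+n)|x|^{A-B}-(B-2+n)$ is strictly increasing from a negative value to $+\infty$ (using $B>2-n$ to get $B-2+n>0$), and read off the unique zero $r_0=\bigl(\tfrac{B-2+n}{A-2+n}\bigr)^{1/(A-B)}$ with the stated sign change. The paper states this more tersely but the decomposition, the role of the hypothesis, and the resulting $r_0$ are identical.
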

\begin{proof}
From the definition of $f(|x|)$ in (\ref{caseAxiao2}), we know that
$$
f(|x|)=|x|^{B-2}\left((A-2+n)|x|^{A-B}-(B-2+n)\right).
$$
Due to $2-n<B<A\leq2$, and taking $r_0:=\left(\frac{B-2+n}{A-2+n}\right)^{\frac{1}{A-B}}>0$ satisfying $f(r_0)=0$, we can deduce
(\ref{fxiao0}) and (\ref{fda0}).
\end{proof}
\begin{thm}\label{th2.1}(Space-Time uniform $L^{\infty}$-bound)
Assume that the non-negative initial data $\rho_0\in L^{1}(\mathbb{R}^n)\cap L^{\infty}(\mathbb{R}^n)$, the diffusion exponent $m>1-\frac{2}{n}$, and $2-n<B<A\leq2$. If $\rho$ is a global weak solution, then it satisfies a uniform in time $L^\infty$-bound
$$
\|\rho\|_{L^{\infty}(\mathbb{R}_+; L^{\infty}(\mathbb{R}^n))}\leq C\,,
$$
for all $\lambda>0$.
\end{thm}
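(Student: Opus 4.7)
The plan is to follow the three-step program announced in the introduction.

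\noindent\textit{Step 1 ($L^p$-identity and nonlocal control).} Multiplying \eqref{model} by $p\rho^{p-1}$ and integrating by parts, together with $\Delta(U\ast\rho)=f\ast\rho$ for $f$ as in \eqref{caseAxiao2}, yields for every $p>1$
\begin{equation}\label{planstep1}
\frac{d}{dt}\|\rho\|_{L^p}^p+\frac{4mp(p-1)}{(p+m-1)^2}\bigl\|\nabla\rho^{(p+m-1)/2}\bigr\|_{L^2}^2=(p-1)\int_{\mathbb R^n}\rho^p\,(f\ast\rho)\,dx.
\end{equation}
By the previous lemma $f\leq 0$ on $B(0,r_0)$, while on $\{|x|\geq r_0\}$ one has $0\leq f(|x|)\leq(A-2+n)r_0^{A-2}$ because $A-2\leq 0$ forces $|x|^{A-2}$ to be decreasing. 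Hence $f_+\in L^\infty(\mathbb R^n)$ with a norm depending only on $A,B,n$, and Young's convolution inequality gives $\|f\ast\rho\|_{L^\infty}\leq C_0 M_0$. The right-hand side of \eqref{planstep1} is then bounded by $(p-1)C_0M_0\|\rho\|_{L^p}^p$.

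\noindent\textit{Step 2 (closing the Bernoulli ODI).} I next apply the Sobolev inequality to $v=\rho^{(p+m-1)/2}$: $\|\rho\|_{L^q}^{p+m-1}\leq S_n\bigl\|\nabla\rho^{(p+m-1)/2}\bigr\|_{L^2}^2$ with $q=n(p+m-1)/(n-2)$, handling $n=2$ by the corresponding Gagliardo--Nirenberg version. Interpolating $\|\rho\|_{L^p}$ between $M_0=\|\rho\|_{L^1}$ and $\|\rho\|_{L^q}$ gives $\|\rho\|_{L^p}^p\leq M_0^{p\theta}\|\rho\|_{L^q}^{p(1-\theta)}$ with $p(1-\theta)=(p-1)q/(q-1)$; the hypothesis $m>1-\tfrac{2}{n}$ is precisely what makes $p(1-\theta)<p+m-1$, so Young's inequality yields
\[\|\rho\|_{L^p}^p\leq\varepsilon\bigl\|\nabla\rho^{(p+m-1)/2}\bigr\|_{L^2}^2+C(\varepsilon,p,n,m)\,M_0^{\gamma_p}.\]
Choosing $\varepsilon\sim 1/p$ absorbs half of the dissipation from \eqref{planstep1} into the right-hand side, and a second application of Sobolev plus interpolation (run in the opposite direction) bounds the remaining dissipation from below by a superlinear power of $\|\rho\|_{L^p}^p$. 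The outcome is a Bernoulli-type differential inequality
\begin{equation}\label{planstep2b}
\frac{d}{dt}\|\rho\|_{L^p}^p+a_p\|\rho\|_{L^p}^{p(1+\sigma_p)}\leq b_p,\qquad\sigma_p=\frac{nm+2-n}{n(p-1)}>0,
\end{equation}
with $a_p,b_p>0$ explicitly computable in $p,n,m,M_0$.

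\noindent\textit{Step 3 (bootstrap to $L^\infty$).} A direct comparison with \eqref{planstep2b} gives the time-uniform estimate
\[\|\rho(\cdot,t)\|_{L^p}\leq\max\Bigl\{\|\rho_0\|_{L^p},\,(b_p/a_p)^{1/(p(1+\sigma_p))}\Bigr\}\qquad\text{for all }t>0.\]
Evaluating this along the sequence $p_k=2^k p_0$, using the interpolation $\|\rho_0\|_{L^{p_k}}\leq\|\rho_0\|_{L^1}^{1/p_k}\|\rho_0\|_{L^\infty}^{1-1/p_k}$, and verifying that $p_k^{-1}\log(b_{p_k}/a_{p_k})$ stays bounded as $k\to\infty$, I let $p_k\to\infty$ to conclude the desired uniform $L^\infty$-bound.

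\noindent\textit{Main obstacle.} The delicate point is the coupled $p$-dependence in \eqref{planstep2b}: the source $b_p$ and the linear factor $(p-1)C_0M_0$ grow with $p$, while the dissipation coefficient $a_p$ is only of order one and $\sigma_p\sim 1/p$ shrinks. Choosing the interpolation exponents so that the geometric factors produced at each doubling step are summable in $k$ rather than multiplying to infinity is the combinatorial heart of the argument; this is precisely where the scheme of \cite{LW} must be adapted to the present nonlocal potential setting.
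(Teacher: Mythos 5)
Your Step 1 matches the paper exactly: the $L^p$-identity, the splitting of the convolution kernel using the lemma, and the bound $\|f_+\ast\rho\|_{L^\infty}\leq C_0 M_0$ are all correct and give the starting ODI $\frac{d}{dt}\|\rho\|_{L^p}^p\leq -2C_1\|\nabla\rho^{(m+p-1)/2}\|_{L^2}^2+(p-1)C_0M_0\|\rho\|_{L^p}^p$. The gap is in Steps 2--3. You interpolate $\|\rho\|_{L^p}$ between $L^1$ and $L^q$ with $q=n(m+p-1)/(n-2)$ and close a Bernoulli-type ODI in a single $p$. However, the conjugate Young exponent $q_1$ you must use to split the linear term $(p-1)C_0M_0\|\rho\|_{L^p}^p$ against the dissipation then equals $\frac{n(m+p-1)-n+2}{nm-n+2}$, which grows \emph{linearly} in $p$. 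Consequently the source $b_p$ contains a factor $\big(C(p-1)\big)^{q_1}\sim p^{cp}$, and since $\sigma_p\sim 1/p$ the equilibrium of your Bernoulli ODE behaves like $(b_p/a_p)^{1/(p(1+\sigma_p))}\sim p^{1/\tau}$ with $\tau=m-1+\tfrac2n$, which \emph{diverges} polynomially as $p\to\infty$. So the quantity you claim to verify --- that $p_k^{-1}\log(b_{p_k}/a_{p_k})$ stays bounded --- is in fact unbounded (it grows like $\log p_k$), and the limit $k\to\infty$ does not give an $L^\infty$-bound. You correctly flag this in your ``main obstacle'' paragraph, but you do not supply the fix, and the fix is not a matter of bookkeeping: it changes the structure of Step~2.

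The paper's resolution, which is the heart of the argument, is to interpolate $\|\rho\|_{L^{p_k}}$ between $\|\rho\|_{L^{p_{k-1}}}$ (\emph{not} $\|\rho\|_{L^1}$) and $\|\rho\|_{L^{n(m+p_k-1)/(n-2)}}$. With $p_k=2^k+1$ the resulting interpolation parameter $\theta_2$ satisfies $p_k/p_{k-1}\to 2$, which keeps the Young exponent $\ell_2=\frac{m+p_k-1}{m+p_k-1-(1-\theta_2)p_k}$ bounded between $1$ and $n+1$ \emph{uniformly in $k$} (it converges to $(n+2)/2$). One then lands on the genuine two-level iterative inequality $\frac{d}{dt}\|\rho\|_{L^{p_k}}^{p_k}\leq-\|\rho\|_{L^{p_k}}^{p_k}+\tilde Cp_k^{\ell_2}\big(\|\rho\|_{L^{p_{k-1}}}^{p_{k-1}}\big)^\eta$ with $\eta<2$, and because $\ell_2$ is uniformly bounded, the bootstrap gives a finite infinite product: the polynomial prefactor contributes $\prod_k p_k^{\ell_2/p_k}$, which converges since $\sum_k k/2^k<\infty$. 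Replace your Step 2 by this interpolation against $L^{p_{k-1}}$, derive the iterative (not Bernoulli) ODI, and Step 3 then goes through by solving the iteration as in (2.21)--(2.23) of the paper.
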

\begin{proof}
As mentioned above, we follow the three-step procedure implemented in some related degenerated Keller-Segel models in \cite{LW}.

{\bf Step 1.-} We first prove that the solution $\rho$ has uniform in time $L^p$-bounds for any fixed $p>1$. Taking $p\rho^{p-1}$ as a test function in the equation (\ref{model}), and integrating the result equation respect to $x$ in $\mathbb{R}^n$, we deduce
\begin{eqnarray}\label{timeder1}
\frac{d}{dt}\dint_{\mathbb{R}^n} \rho^{p} \,dx
=-\dfrac{4mp(p-1)}{(m+p-1)^2}\dint_{\mathbb{R}^n}|\nabla\rho^{\frac{m+p-1}{2}}|^2 dx +(p-1)\dint_{\mathbb{R}^n}\rho^{p}\Delta (U*\rho)\,dx.
\end{eqnarray}
Now, we will focus on estimating the last term in the right-hand side. From (\ref{caseAxiao2}) and (\ref{fxiao0}), we estimate it as
\begin{eqnarray}\label{rightside1}
(p-1)\dint_{\mathbb{R}^n}\rho^{p}\Delta (U*\rho)\,dx\leq (p-1)\dint_{\mathbb{R}^n}\rho^{p}(x)\int_{|x-y|\geq r_0}f(|x-y|)\rho(y)\,dy\,dx.
\end{eqnarray}
Noticing that $2-n<A\leq 2$ implies $|x-y|^{A-2}\leq r_0^{A-2}$ for $|x-y|\geq r_0$ and $B>2-n$, then from (\ref{rightside1}), we deduce that
\begin{eqnarray}\label{rightside2}
(p-1)\dint_{\mathbb{R}^n}\rho^{p}\Delta (U*\rho)\,dx\leq (p-1)(A-2+n)r_0^{A-2}M_0\dint_{\mathbb{R}^n}\rho^{p}(x)\,dx,
\end{eqnarray}
where $M_0$ is defined by (\ref{M0}).
Hence (\ref{timeder1}) and (\ref{rightside2}) gives that
\begin{eqnarray}\label{timeder2}
\frac{d}{dt}\dint_{\mathbb{R}^n} \rho^{p} \,dx
\leq-2C_1\dint_{\mathbb{R}^n}|\nabla\rho^{\frac{m+p-1}{2}}|^2 dx +C(A,B,n,M_0)(p-1)\dint_{\mathbb{R}^n}\rho^{p}(x)\,dx,
\end{eqnarray}
where $0<C_1<\tfrac{2mp(p-1)}{(m+p-1)^2}$ is a fixed constant independent of $p$.

Since $p>1$ and $m>1-\frac{2}{n}$, we have $1<p<\frac{(m+p-1)n}{n-2}$. So, the interpolation inequality implies that
\begin{eqnarray}\label{inteineq1}
\|\rho\|_{L^p}\leq\|\rho\|_{L^1}^{\theta_1}\|\rho\|^{1-\theta_1}_{L^{\frac{(m+p-1)n}{n-2}}},
\end{eqnarray}
where $\frac{1}{p}=\theta_1+\frac{n-2}{(m+p-1)n}(1-\theta_1)$, i.e.,
\begin{eqnarray*}
\theta_1=1-\frac{(p-1)(m+p-1)n}{p\big((m+p-1)n-n+2\big)},\quad 1-\theta_1=\frac{(p-1)(m+p-1)n}{p\big((m+p-1)n-n+2\big)}.
\end{eqnarray*}
Using (\ref{inteineq1}) and the Sobolev inequality, we get
\begin{eqnarray}\label{inteineq2}
\|\rho\|^p_{L^p}\leq\|\rho\|_{L^1}^{p\theta_1}\left(S_n^{-\frac{1}{2}}\|\nabla \rho^{\frac{m+p-1}{2}}\|_{L^2}\right)^{\frac{2p(1-\theta_1)}{m+p-1}},
\end{eqnarray}
where $S_n$ is the best constant of the Sobolev inequality
written as $S_n \|h\|_{L^{2^*}}^2 \leq \|\nabla h\|_{L^2}^2$ for all $h$ with finite $L^2$ gradient. Since $m>1-\frac{2}{n}$, we can directly compute that
$$
 \frac{p(1-\theta_1)}{m+p-1}= \frac{(p-1)n}{n(m+p-1)-n+2}<1,
$$
which allows us to use the Young's inequality for (\ref{inteineq2}). Thus we get
\begin{align}\label{inteineq3}
C(A,B,n,M_0)(p-1)\|\rho\|^p_{L^p}\leq& \,\sigma_1 \|\nabla \rho^{\frac{m+p-1}{2}}\|_{L^2}^{\frac{2p(1-\theta_1)}{m+p-1}q_2}\\
&+ C(\sigma_1)(C(A,B,n,M_0)(p-1))^{q_1}S_n^{-\frac{q_1p(1-\theta_1)}{m+p-1}}\|\rho\|_{L^1}^{q_1p\theta_1}\nonumber
\end{align}
for any $\sigma_1>0$ and $C(\sigma_1)=(\sigma_1 q_2)^{-\frac{q_1}{q_2}}q_1^{-1}$, where $q_1, q_2>1$ satisfies $\tfrac{1}{q_1}+\tfrac{1}{q_2}=1$.
Setting
$$
\frac{2p(1-\theta_1)}{m+p-1}q_2=2 \quad \mbox{ and } \quad \sigma_1=C_1
$$
into \eqref{inteineq3} together with (\ref{timeder2}), the differential inequality
\begin{eqnarray}\label{timeder3}
\frac{d}{dt}\dint_{\mathbb{R}^n} \rho^{p} \,dx
\leq-C_1\dint_{\mathbb{R}^n}|\nabla\rho^{\frac{m+p-1}{2}}|^2 dx +C(A,B,n,m,M_0,p)
\end{eqnarray}
holds.
Similar to (\ref{inteineq3}), we can also obtain
\begin{eqnarray}\label{inteineq4}
\|\rho\|^p_{L^p}\leq \bar\sigma_1 \|\nabla \rho^{\frac{m+p-1}{2}}\|_{L^2}^{\frac{2p(1-\theta_1)}{m+p-1}q_2}+ C(\bar\sigma_1)S_n^{-\frac{q_1p(1-\theta_1)}{m+p-1}}\|\rho\|_{L^1}^{q_1p\theta_1}
\end{eqnarray}
for any $\bar\sigma_1>0$.
Hence (\ref{timeder3}), with $\bar\sigma_1=C_1$, and (\ref{inteineq4}) results in
\begin{eqnarray*}
\frac{d}{dt}\dint_{\mathbb{R}^n} \rho^{p} \,dx
\leq -\|\rho\|^p_{L^p} +C(A,B,n,m,M_0,p).
\end{eqnarray*}
A simple computation shows that
\begin{eqnarray}\label{lpestimate1}
\|\rho\|^p_{L^p}\leq \|\rho_0\|^p_{L^p}+C(A,B,n,m,M_0,p),
\end{eqnarray}
i.e., for any fixed $p>1$, showing that the $L^p$-norm of $\rho$ is uniformly bounded in time.

\vskip 6pt

{\bf Step 2.-} We now claim that the following iterative inequality
\begin{eqnarray}\label{diedai}
\frac{d}{dt}\| \rho\|_{L^{p_k}}^{p_k}
\leq-\|\rho\|^{p_k}_{L^{p_k}} +\tilde C p_k^{\ell_2}\left(\| \rho\|_{L^{p_{k-1}}}^{p_{k-1}}\right)^{\eta}
\end{eqnarray}
holds, where $0<\eta<2$, $1<\ell_2<n+1$, $p_k=2^k+1$, $k=0,1,2,...$, and $\tilde C$ is a constant independent of $p_k$. The same procedure used to obtain (\ref{timeder2}) in Step 1 gives that
\begin{eqnarray}\label{timeder2d}
\frac{d}{dt}\|\rho\|_{L^{p_k}}^{p_k}\leq -2C_1\|\nabla\rho^{\frac{m+p_k-1}{2}}\|^2_{L^2} +C(A,B,n,M_0)p_k\|\rho\|_{L^{p_k}}^{p_k}.
\end{eqnarray}
Using again the interpolation inequality and the Sobolev inequality, we infer that	
\begin{align}\label{inteineq1d}
\|\rho\|^{p_k}_{L^{p_k}}\leq&~\|\rho\|_{L^{p_{k-1}}}^{p_k\theta_2}\|\rho\|^{(1-\theta_2)p_k}_{L^{\frac{(m+p_k-1)n}{n-2}}} \leq \|\rho\|_{L^{p_{k-1}}}^{p_k\theta_2}\left(S_n^{-\frac{1}{2}}\|\nabla \rho^{\frac{m+p_k-1}{2}}\|_{L^2}\right)^{\frac{2p_k(1-\theta_2)}{m+p_k-1}},
\end{align}
where
\begin{eqnarray}\label{theta2}
\theta_2=\frac{\frac{1}{p_k}-\frac{n-2}{n(m+p_k-1)}}{\frac{1}{p_{k-1}}-\frac{n-2}{n(m+p_k-1)}},
\quad 1-\theta_2=\frac{\frac{1}{p_{k-1}}-\frac{1}{p_k}}{\frac{1}{p_{k-1}}-\frac{n-2}{n(m+p_k-1)}}.
\end{eqnarray}
Hence, we easily compute that
$$
(1-\theta_2)\frac{p_k}{m+p_k-1}<1,\quad \mbox{ due to } m>1-\frac{2}{n},~~p_k=2^k+1.
$$
Therefore, the Young inequality implies that
\begin{align}\label{inteineq3d}
C(A,B,n,M_0)p_k\|\rho\|^{p_k}_{L^{p_k}}\leq& \,C(\sigma_1)(C(A,B,n,M_0)p_k)^{\ell_2}S_n^{-\frac{\ell_2p_k(1-\theta_2)}{m+p_k-1}}\|\rho\|_{L^{p_{k-1}}}^{\ell_2 p_k\theta_2}\nonumber \\
&+\sigma_1 \|\nabla \rho^{\frac{m+p_k-1}{2}}\|_{L^2}^{\frac{2p_k(1-\theta_2)}{m+p_k-1}\ell_1}
\end{align}
for any $\sigma_1>0$ with $C(\sigma_1)=(\sigma_1 \ell_1)^{-\frac{\ell_2}{\ell_1}}\ell_2^{-1}$, where $\ell_1, \ell_2>1$ satisfy $\tfrac{1}{\ell_1}+\tfrac{1}{\ell_2}=1$. Setting
\begin{eqnarray}\label{sigma}
\frac{2p_k(1-\theta_2)}{m+p_k-1}\ell_1=2 \quad \mbox{ and } \quad \sigma_1=C_1
\end{eqnarray}
into (\ref{inteineq3d}) together with (\ref{timeder2d}), we infer that
\begin{eqnarray}\label{timeder3d}
\frac{d}{dt}\|\rho\|_{L^{p_k}}^{p_k}\leq -C_1\|\nabla\rho^{\frac{m+p_k-1}{2}}\|^2_{L^2}  +C(\sigma_1)(C(A,B,n,M_0)p_k)^{\ell_2} S_n^{-\frac{\ell_2p_k(1-\theta_2)}{m+p_k-1}}\|\rho\|_{L^{p_{k-1}}}^{\ell_2 p_k\theta_2}.
\end{eqnarray}

On the other hand, from (\ref{inteineq1d}), we can get
\begin{eqnarray}\label{inteineq4d}
\|\rho\|^{p_k}_{L^{p_k}}\leq C(\sigma_1)S_n^{-\frac{\ell_2p_k(1-\theta_2)}{m+p_k-1}}\|\rho\|_{L^{p_{k-1}}}^{\ell_2 p_k\theta_2}+\sigma_1 \|\nabla \rho^{\frac{m+p_k-1}{2}}\|_{L^2}^{2},
\end{eqnarray}
where $\sigma_1$ and $\ell_2$ are set in (\ref{sigma}). Thus, form (\ref{timeder3d}) and (\ref{inteineq4d}) we deduce that
\begin{eqnarray}\label{timeder3d'}
\frac{d}{dt}\|\rho\|_{L^{p_k}}^{p_k}\!\leq \!-\|\rho\|_{L^{p_k}}^{p_k} \!\!+C(\sigma_1) \!\left(1\!+\!(C(A,B,n,M_0)p_k)^{\ell_2}\right) S_n^{-\frac{\ell_2p_k(1-\theta_2)}{m+p_k-1}}\!\!\!\left(\|\rho\|^{p_{k-1}}_{L^{p_{k-1}}}\right)^{\frac{\ell_2 p_k\theta_2}{p_{k-1}}}\!\!.
\end{eqnarray}
Using (\ref{theta2}) and (\ref{sigma}), and after some tedious computations, we deduce that
\begin{eqnarray}\label{ell}
\ell_2=\frac{m+p_k-1}{m+p_k-1-(1-\theta_2)p_k}<n+1,\quad \eta:=\frac{\ell_2 p_k\theta_2}{p_{k-1}}=\frac{m-1+\frac{2}{n}p_k}{m-1+\frac{2}{n}p_{k-1}}<2,
\end{eqnarray}
and
\begin{eqnarray}\label{csigma}
 C(\sigma_1)\left(1+C(A,B,n,M_0)^{\ell_2}\right) S_n^{-\frac{\ell_2p_k(1-\theta_2)}{m+p_k-1}}\to C(m,n,A,B,M_0),~~\mbox{ as } p_k\to\infty,
\end{eqnarray}
i.e., there is a constant $\tilde C:=\tilde C(m,n,A,B,M_0)>1$ independent of $p_k$ such that
$$
C(\sigma_1)\left(1+C(A,B,n,M_0)^{\ell_2}\right) S_n^{-\frac{\ell_2p_k(1-\theta_2)}{m+p_k-1}}\leq \tilde C.
$$
Hence, (\ref{timeder3d'}) implies that (\ref{diedai}) holds.

\vskip 6pt

{\bf Step 3.} In this step, we will use the iterative inequality (\ref{diedai}) to prove the $L^{\infty}$-bound of $\rho$ by induction. In (\ref{lpestimate1}) from Step 1, we take $p=p_0=2$, then it holds that
\begin{eqnarray}\label{lp0estimate}
\|\rho\|^{p_0}_{L^{p_0}}\leq \|\rho_0\|^{p_0}_{L^{p_0}}+C(A,B,n,m,M_0)\leq \bar C,
\end{eqnarray}
where $\bar C>1$ is a constant independent of $p_k$ and the time $t$.

Let $y_k(t):=\|\rho\|^{p_k}_{L^{p_k}}$, solving the iterative inequality (\ref{diedai}), we obtain
\begin{align}\label{inequa4}
\big(e^t y_k(t)\big)'\leq& ~\tilde C p_k^{\ell_2}y_{k-1}^{\eta}e^t\nonumber\\
\leq&~ \tilde C 2^{n+1}2^{(n+1)k}\max\{1,\sup_{t\geq 0}y_{k-1}^2(t)\}e^t,
\end{align}
where the last inequality used the facts $p_k=2^k+1$, $\eta<2$ and $1<\ell_2\leq n+1$.
Let $a_k:=\tilde C 2^{n+1}2^{(n+1)k}>1$ and $D_0:=\max\{1,\|\rho_0\|_{L^1},\|\rho_0\|_{L^{\infty}}\}$, then there exists $D>0$ such that $$
D_0^{\frac{p_k}{2^k}}\leq D\qquad \mbox{for all } k\,.
$$
Then
\begin{eqnarray}\label{inequa5}
y_k(0):=\|\rho_0\|^{p_k}_{L^{p_k}}\leq \left(\max\{1,\|\rho_0\|_{L^1},\|\rho_0\|_{L^{\infty}}\}\right)^{p_k}= D_0^{p_k}\leq D^{2^k}.
\end{eqnarray}
By (\ref{inequa4}) and (\ref{inequa5}), we have
\begin{align}\label{iterative}
y_k(t)\leq&~ a_k\max\{1,\sup_{t\geq 0}y_{k-1}^2(t)\}(1-e^{-t})+y_k(0)e^{-t}\nonumber\\
\leq&~ 2 a_k\max\{\sup_{t\geq 0}y_{k-1}^2(t),D^{2^k}\}.
\end{align}
From (\ref{iterative}) after some iterative steps, we have
\begin{align*}
y_k(t)\leq&~ 2a_k(2a_{k-1})^2(2a_{k-2})^{2^2}\cdot\cdot\cdot(2a_1)^{2^{k-1}}\max\{\sup_{t\geq 0}y_0^{2^k}(t),D^{2^{k}}\}\nonumber\\
=& ~(2^{n+2}\tilde C)^{2^{k}-1} (2^{n+1})^{2\cdot2^k-k-2}\max\{\sup_{t\geq 0}y_0^{2^k}(t),D^{2^{k}}\},
\end{align*}
Taking the power $1/p_k$ to above inequality, then
\begin{eqnarray}\label{yk}
\|\rho\|_{L^{p_k}}
\leq 2^{n+2} 2^{2(n+1)}\tilde C  \max\{\sup_{t\geq 0}y_0(t),D\}.
\end{eqnarray}
So, (\ref{lp0estimate}) and (\ref{yk}) imply that $\|\rho\|_{L^{\infty}}$ is uniformly bounded in time.
\end{proof}

\subsection{The case $2-n=B<A\leq 2$ and $\lambda>0$}
Notice that the potential $-\frac{|x|^{B}}{B}$ is the Newtonian potential due to $B=2-n$. Hence we can formally compute
\begin{align}
\Delta (U*\rho)(x)=&~\Delta \left(\frac{|x|^A}{A}*\rho\right)-n\alpha_n\left((-\Delta)\phi*\rho\right)\nonumber\\
=&~(A-2+n)|x|^{A-2}*\rho-n\alpha_n\rho(x).\label{caseBdeng1}
\end{align}
Here $\alpha_n$ is the volume of $n$-dimensional unit ball and $\phi(x)$ is the fundamental solution of the Laplace equation.
\begin{thm}\label{th3.1}
(Space-Time uniform $L^{\infty}$-bound)
 Assume that the initial data $\rho_0\in L^{1}_+\cap L^{\infty}(\mathbb{R}^n)$, and the diffusion exponent $m>1-\frac{2}{n}$, and $2-n=B<A\leq2$. If $\rho$ is a global weak solution, then it satisfies a uniform in time $L^\infty$-bound
$$
\|\rho\|_{L^{\infty}(\mathbb{R}_+; L^{\infty}(\mathbb{R}^n))}\leq C\,,
$$
for all $\lambda > 0$.
\end{thm}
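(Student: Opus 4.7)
The plan is to mirror the three-step scheme of Theorem~\ref{th2.1}; the only new obstruction is that (\ref{caseBdeng1}) replaces the sign-change convolution kernel $f(|x|)$ of subcase (i) by a pointwise Newtonian damping term $-n\alpha_n\rho(x)$. Thus, Steps~2 and 3, together with the interpolation/Sobolev/Young chain that concludes Step~1, will carry over verbatim once we establish the analogue of (\ref{timeder2}), namely a differential inequality of the form
\[
\frac{d}{dt}\|\rho\|_{L^{p}}^{p} \leq -2C_1\|\nabla\rho^{(m+p-1)/2}\|_{L^{2}}^{2} + C(A,n,M_0)(p-1)\|\rho\|_{L^{p}}^{p}.
\]

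Testing (\ref{model}) against $p\rho^{p-1}$ and invoking (\ref{caseBdeng1}) leaves the interaction contribution
\[
(p-1)(A-2+n)\dint_{\mathbb{R}^n}\rho^{p}(|x|^{A-2}\ast\rho)\,dx \;-\; (p-1)n\alpha_{n}\|\rho\|_{L^{p+1}}^{p+1}.
\]
I would split the attractive kernel at radius one. On the tail, $|x|^{A-2}\leq 1$ for $|x|>1$ (because $A\leq 2$), so $(|x|^{A-2}\chi_{\{|x|>1\}}\ast\rho)(x)\leq M_0$, giving the bound $(p-1)(A-2+n)M_0\|\rho\|_{L^p}^p$. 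On the near part, since $A>B=2-n$, the truncated kernel lies in $L^1$ with $\||x|^{A-2}\chi_{\{|x|\leq 1\}}\|_{L^1}=\frac{n\alpha_n}{A-2+n}$; applying H\"older with exponents $\frac{p+1}{p}$ and $p+1$ followed by Young's convolution inequality $\|K\ast\rho\|_{L^{p+1}}\leq\|K\|_{L^1}\|\rho\|_{L^{p+1}}$ yields
\[
\dint_{\mathbb{R}^n}\rho^{p}(|x|^{A-2}\chi_{\{|x|\leq 1\}}\ast\rho)\,dx \leq \frac{n\alpha_n}{A-2+n}\|\rho\|_{L^{p+1}}^{p+1}.
\]
Multiplying by $(p-1)(A-2+n)$, the near-part contribution is exactly $(p-1)n\alpha_n\|\rho\|_{L^{p+1}}^{p+1}$, which cancels the Newtonian damping without residue. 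The whole interaction is therefore controlled by $(p-1)(A-2+n)M_0\|\rho\|_{L^p}^p$, producing the desired analogue of (\ref{timeder2}).

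The one potentially delicate point is spotting the coefficient-exact cancellation: $(A-2+n)\cdot\||x|^{A-2}\chi_{\{|x|\leq 1\}}\|_{L^1}$ matches $n\alpha_n$ precisely, so the Newtonian pointwise repulsion absorbs the singular near-field of the attractive convolution with no $L^{p+1}$-level leftover. This is the sense in which the problem \emph{reduces to subcase (i)}: the Newtonian damping plays the role of the missing repulsive convolution kernel on small scales and enforces the short-distance cutoff that the zero $r_0$ of $f$ provided in subcase (i), effectively with cutoff radius $r_0=1$. An equivalent route starts from the Riesz rearrangement inequality $\int\rho^p(|x|^{A-2}\ast\rho)\,dx\leq\int(\rho^*)^p(|x|^{A-2}\ast\rho^*)\,dx$ and then exploits that $(|x|^{A-2}\ast\rho^*)$ attains its maximum at the origin, which is presumably the route suggested by the authors' phrase ``via rearrangement inequalities''; either way the near-field/Newton cancellation is the crux, after which Steps 2 and 3 from Theorem~\ref{th2.1} run unchanged, with $C(A,B,n,M_0)$ replaced by $(A-2+n)M_0$.
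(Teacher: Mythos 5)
Your proof is correct, and it arrives at the same near-field/tail decomposition as the paper but controls the near part by a genuinely different (and more elementary) argument. Where the paper bounds
\[
\dint_{\mathbb{R}^n}\rho^{p}\bigl((|x|^{A-2}\chi_{|x|<R})\ast\rho\bigr)\,dx
\]
by the Riesz rearrangement inequality plus the observation that the truncated kernel is ``less concentrated than'' $\frac{n\alpha_n}{A-2+n}\delta$, you obtain the same bound $\frac{n\alpha_n}{A-2+n}\|\rho\|_{L^{p+1}}^{p+1}$ directly via H\"older (with exponents $\frac{p+1}{p}$ and $p+1$) followed by Young's convolution inequality $\|K\ast\rho\|_{L^{p+1}}\leq\|K\|_{L^1}\|\rho\|_{L^{p+1}}$. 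This is a cleaner route: it uses no rearrangement machinery, it requires no radial symmetry of $\rho$, and the constant drops out automatically as $\||x|^{A-2}\chi_{|x|\leq 1}\|_{L^1}=\frac{n\alpha_n}{A-2+n}$. The observation that cutting at $R=1$ gives an \emph{exact} cancellation with the Newtonian damping is a nice touch, though the paper's choice of $R$ small (under-cancellation, leaving a residual negative $L^{p+1}$ term) serves equally well. Your split also covers $A=2$ uniformly, where the paper treats it as a separate (trivial) case, and once the analogue of (\ref{timeder2}) is in hand, Steps 2 and 3 of Theorem~\ref{th2.1} carry over verbatim as you claim. One small caveat: in your speculative aside about the rearrangement alternative, ``$(|x|^{A-2}\ast\rho^*)$ attains its maximum at the origin'' is true but it is not the mechanism the paper uses---pulling out the sup at the origin would only produce a bound in terms of $\|\rho\|_{L^p}^p$, not $\|\rho\|_{L^{p+1}}^{p+1}$. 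The paper's rearrangement step effectively packages the same H\"older/Young estimate after symmetrization; your direct route bypasses the symmetrization altogether.
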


\begin{proof}
We use the same strategy as proving Theorem \ref{th2.1}. We will be able to reduce to the the same proof as in Theorem \ref{th2.1} after the first step. We distinguish two cases.

\vskip 6pt

{\bf Case A.-} We start by considering the case $A<2$.

\vskip 6pt

{\bf Step 1.-} We first prove the uniform in time $L^p$-bound
\begin{eqnarray}\label{3lpestimate}
\|\rho\|^p_{L^p}\leq \|\rho_0\|^p_{L^p}+C(A,B,n,m,M_0,p)\quad \mbox{ for any }p>1.
\end{eqnarray}
For the case $A<2$, taking $p\rho^{p-1}$, $p>\max\{1,\frac{A-2+n}{2-A}\}$, as a test function in the equation (\ref{model}), integrating it and using (\ref{caseBdeng1}), we have
\begin{align}\label{3timeder1}
\frac{d}{dt}\dint_{\mathbb{R}^n} \rho^{p} \,dx
=&-\dfrac{4mp(p-1)}{(m+p-1)^2}\dint_{\mathbb{R}^n}|\nabla\rho^{\frac{m+p-1}{2}}|^2 dx -n\alpha_n(p-1)\dint_{\mathbb{R}^n}\rho^{p+1}\,dx\nonumber\\
&+(p-1)(A-2+n)\dint_{\mathbb{R}^n}\rho^{p}(|x|^{A-2}*\rho)\,dx.
\end{align}
Notice that for any $R>0$, it holds that
$$
|x|^{A-2}\leq |x|^{A-2}\chi_{|x|<R}(x)+R^{A-2}\,,
$$
thus the integral coming from the attracting part can be controlled as
\begin{align}\label{3timeder1new}
\dint_{\mathbb{R}^n}\rho^{p}(|x|^{A-2}*\rho)\,dx&\leq \dint_{\mathbb{R}^n}\rho^{p}((|x|^{A-2}\chi_{|x|<R}(x))*\rho)\,dx+\dint_{\mathbb{R}^n}\rho^{p}(R^{A-2}*\rho)\,dx\nonumber\\
&=: I_{R}+M_0R^{A-2}\dint_{\mathbb{R}^n}\rho^{p}\,dx.
\end{align}
Since $|x|^{A-2}$ is locally integrable for $A > 2-n$, we can get by choosing $R=R(A,n)$ small
$$
\dint_{\mathbb{R}^n}|x|^{A-2}\chi_{|x|<R}(x)\,dx\leq \frac{n\alpha_n}{A-2+n},
$$
which implies that $|x|^{A-2}\chi_{|x|<R}(x)$ is less concentrated than $\frac{n\alpha_n}{A-2+n}\delta(x)$. Hence, we make use of the Riesz rearrangement inequality to obtain that
\begin{align}\label{3timeder1new1}
I_{R}&\leq \dint_{\mathbb{R}^n}(\rho^*)^{p}((|x|^{A-2}\chi_{|x|<R}(x))*\rho^*)\,dx\nonumber \leq \frac{n\alpha_n}{A-2+n}\dint_{\mathbb{R}^n}(\rho^*)^{p+1}\,dx\nonumber\\
&=\frac{n\alpha_n}{A-2+n}\dint_{\mathbb{R}^n}\rho^{p+1}\,dx,
\end{align}
where $\rho^*$ is a rearrangement function of $\rho$.
Thus we have
\begin{align*}
(A-2+n)\dint_{\mathbb{R}^n}\rho^{p}(|x|^{A-2}*\rho)\,dx\leq n\alpha_n\dint_{\mathbb{R}^n}\rho^{p+1}\,dx +C(A,n,M_0)\dint_{\mathbb{R}^n}\rho^{p}\,dx.
\end{align*}
So, \eqref{3timeder1} becomes
\begin{align*}
\frac{d}{dt}\dint_{\mathbb{R}^n} \rho^{p} \,dx
\leq&-\dfrac{4mp(p-1)}{(m+p-1)^2}\dint_{\mathbb{R}^n}|\nabla\rho^{\frac{m+p-1}{2}}|^2 dx+(p-1)C(A,n,M_0)\dint_{\mathbb{R}^n}\rho^{p}\,dx,
\end{align*}
which is of the same form as (\ref{timeder2}) in Theorem \ref{th2.1}, and the rest of the arguments (including steps 2 and 3 in Theorem \ref{th2.1}) go through in the same way without any further change.

\vskip 6pt

{\bf Case B.-} Next, we deal with the case $A=2$.

\vskip 6pt

The formula (\ref{3timeder1}) can be written as
\begin{align*}
\frac{d}{dt}\dint_{\mathbb{R}^n} \rho^{p} \,dx
=&-\dfrac{4mp(p-1)}{(m+p-1)^2}\dint_{\mathbb{R}^n}|\nabla\rho^{\frac{m+p-1}{2}}|^2 dx -n\alpha_n(p-1)\dint_{\mathbb{R}^n}\rho^{p+1}\,dx\nonumber\\
&+n(p-1)M_0\dint_{\mathbb{R}^n}\rho^{p}\,dx\,,
\end{align*}
leading to
\begin{align*}
\frac{d}{dt}\dint_{\mathbb{R}^n} \rho^{p} \,dx
\leq&-\dfrac{4mp(p-1)}{(m+p-1)^2}\dint_{\mathbb{R}^n}|\nabla\rho^{\frac{m+p-1}{2}}|^2 dx +n(p-1)M_0\dint_{\mathbb{R}^n}\rho^{p}\,dx\,.
\end{align*}
Similar to the first case and the proof of Theorem \ref{th2.1}, an iterative inequality and the uniform $L^{\infty}$-bound of solutions can be obtained.
\end{proof}

\subsection{The case $2-n< A\leq 2$ and $\lambda =0$}

The main difference is the use of the Hardy-Littlewood-Sobolev (HLS) inequality in order to cope with the different $L^p$-norm to be bounded. Notice that the case $A=2-n$ was proven in \cite{LW}, so we reduce to the range $2-n< A\leq 2$.

\begin{thm}\label{th3.1b}
(Space-Time uniform $L^{\infty}$-bound)
Assume that the initial data $\rho_0\in L^{1}_+\cap L^{\infty}(\mathbb{R}^n)$, $2-n< A\leq 2$, and the diffusion exponent $m>1-\frac{A}{n}$ and $\lambda=0$. If $\rho$ is a global weak solution, then it satisfies a uniform in time $L^\infty$-bound
$$
\|\rho\|_{L^{\infty}(\mathbb{R}_+; L^{\infty}(\mathbb{R}^n))}\leq C\,.
$$
\end{thm}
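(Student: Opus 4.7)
The plan is to run the same three-step strategy used in Theorems~\ref{th2.1} and~\ref{th3.1}, with the Hardy--Littlewood--Sobolev (HLS) inequality playing the role that the repulsive part of the potential played there. The diffusion-dominated hypothesis $m>1-A/n$ will play here exactly the role that $m>1-2/n$ played in Theorem~\ref{th2.1}: it is the threshold making the relevant Young's inequality applicable. The endpoint $A=2$ is handled immediately by observing that $\Delta(U*\rho)\equiv nM_0$, so the analogue of \eqref{timeder1} reduces at once to the inequality appearing at the end of Step~1 in Theorem~\ref{th2.1} (with $nM_0$ in place of the constant there), and the rest of that proof carries over unchanged.

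So suppose $2-n<A<2$. Now $\Delta(U*\rho)=(A-2+n)|x|^{A-2}*\rho$ provides no pointwise cancellation, and I would control the resulting nonlocal term via HLS,
\begin{equation*}
\int_{\mathbb R^n}\rho^p(|x|^{A-2}*\rho)\,dx \leq C\,\|\rho\|_{L^{q}}^{p+1},\qquad q=\frac{n(p+1)}{2n-2+A}\,,
\end{equation*}
and combine this with a Gagliardo--Nirenberg--Sobolev interpolation of $\|\rho\|_{L^q}^{p+1}$ between $\|\rho\|_{L^1}=M_0$ and $\|\nabla\rho^{(m+p-1)/2}\|_{L^2}$. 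Scaling forces the gradient exponent to be
\begin{equation*}
\mu=\frac{n(p-1)+2-A}{n(m+p-2)+2}\,,
\end{equation*}
and the inequality $\mu<1$ is \emph{equivalent} to the diffusion-dominated condition $m>1-A/n$. Young's inequality then absorbs the nonlocal term into the diffusive dissipation, and a second interpolation/Young step (as in passing from \eqref{timeder3} to \eqref{lpestimate1}) yields a uniform in time $L^p$ bound for each fixed $p>1$, completing Step~1.

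For Step~2 I would repeat the Step~1 argument along the dyadic sequence $p_k=2^k+1$, but interpolate the resulting $\|\rho\|_{L^{q_k}}^{p_k+1}$ against $L^{p_{k-1}}$ and the Sobolev endpoint $L^{(m+p_k-1)n/(n-2)}$ in place of $L^1$; combined with the interpolation of $\|\rho\|_{L^{p_k}}^{p_k}$ done exactly as in Theorem~\ref{th2.1}, this produces an iterative inequality of the form \eqref{diedai}, after which Step~3 is the bootstrap \eqref{iterative}--\eqref{yk} used there without change.

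The main obstacle is the Step~2 bookkeeping. A computation parallel to \eqref{theta2}--\eqref{ell}, using $p_k+1=2p_{k-1}$ and $m+p_k-1=m-2+2p_{k-1}$, gives
\begin{equation*}
\eta_k-2 = \frac{(2-m)(2-A)}{n(m-2)+(n+A)p_{k-1}}\,,
\end{equation*}
so $\eta_k<2$ for $m\geq 2$, while for $1-A/n<m<2$ one only gets $\eta_k=2+O(p_{k-1}^{-1})$. Since the dyadic choice makes $\sum_k p_{k-1}^{-1}$ finite, the product $\prod_{j\leq k}\eta_j$ stays comparable to $2^k\sim p_k$, and the bootstrap of Theorem~\ref{th2.1} still produces a uniform $L^\infty$ bound. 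Positivity of the denominator $n(m-2)+(n+A)p_{k-1}$ for every $k\geq 1$ -- which simultaneously keeps $\ell_2$ bounded and Young's inequality applicable in Step~2 -- is guaranteed precisely by the hypothesis $m>1-A/n$, closing the loop with Step~1.
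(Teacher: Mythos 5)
Your plan is sound and follows the same HLS-plus-iteration skeleton as the paper, but the specific choices inside each step are genuinely different, and one of those choices creates a wrinkle that the paper deliberately avoids. In Step~1, where the paper applies HLS with the off-diagonal exponents $r=(p+1)/p$, $s=n(p+1)/((n-2+A)(p+1)+n)$, keeps $\|\rho\|_{L^1}$ in play, and then needs two interpolations (and the side constraint $p>\frac{A-2+n}{2-A}$, hence a separate argument for small $p$), you use the diagonal HLS exponent $q=n(p+1)/(2n-2+A)$, which collapses the nonlocal term to $\|\rho\|_{L^q}^{p+1}$ and lets a single Gagliardo--Nirenberg interpolation do all the work for every $p>1$. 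Your computation $\mu=\frac{n(p-1)+2-A}{n(m+p-2)+2}$ and the equivalence $\mu<1 \Leftrightarrow m>1-A/n$ are both correct, and this is where the diffusion-dominated hypothesis enters, exactly as you say.

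The divergence that matters is in Step~2. You keep $p_k=2^k+1$, whereas the paper shifts to $p_k=2^k+\frac{n}{2-A}+n$; the shift is made precisely so that both iteration exponents $\eta_1,\eta_2$ stay $\leq 2$ and the Theorem~\ref{th2.1} bootstrap (which uses $y_{k-1}^{\eta}\leq\max\{1,y_{k-1}^2\}$) applies verbatim. Your computation $\eta_k-2=\frac{(2-m)(2-A)}{n(m-2)+(n+A)p_{k-1}}$ is correct (I checked it), and you correctly identify that the denominator is positive for all $k$ iff $p_0>\frac{(2-m)n}{n+A}$, which your hypothesis guarantees. But then for $1-A/n<m<2$ you have $\eta_k>2$, and the literal bootstrap of \eqref{inequa4}--\eqref{yk} does not apply as written. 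Your remedy — that $\sum_k(\eta_k-2)<\infty$, so $\prod_{j\leq k}\eta_j\sim C\,2^k\sim p_k$ — is the right key observation and the bootstrap can indeed be salvaged by iterating $M_k\leq 2a_k\max\{M_{k-1}^{\eta_k},D^{2^k}\}$ directly and taking the $1/p_k$ root of $\prod_j(2a_j)^{\prod_{i>j}\eta_i}\max\{M_0^{\prod_j\eta_j},\max_\ell D^{2^\ell\prod_{i>\ell}\eta_i}\}$, all of whose exponents stay $O(2^k)$. However, this is a genuine modification of Step~3, not a literal reuse of it, and your sketch is thin at exactly that point. In short: same scaffolding, a cleaner Step~1 (one interpolation rather than two and no low-$p$ exception), but a heavier Step~3; the paper trades the other way by shifting $p_k$ so that Step~3 is unchanged. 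Either route closes, provided the $\eta_k>2$ bookkeeping is actually carried out rather than asserted.
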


\begin{proof} We again proceed in 3 steps.

\vskip 6pt

{\bf Step 1.-} We first prove the uniform in time $L^p$-bound
\begin{eqnarray}\label{3lpestimate}
\|\rho\|^p_{L^p}\leq \|\rho_0\|^p_{L^p}+C(A,n,m,M_0,p)\quad \mbox{ for any }p>1.
\end{eqnarray}
For the case $A<2$, taking $p\rho^{p-1}$, $p>\max\{1,\frac{A-2+n}{2-A}\}$, as a test function in the equation (\ref{model}), integrating it and using (\ref{caseBdeng1}), we have
\begin{align}\label{3timeder111}
\frac{d}{dt}\dint_{\mathbb{R}^n} \rho^{p} \,dx
=&-\dfrac{4mp(p-1)}{(m+p-1)^2}\dint_{\mathbb{R}^n}|\nabla\rho^{\frac{m+p-1}{2}}|^2 dx \nonumber\\
&+(p-1)(A-2+n)\dint_{\mathbb{R}^n}\rho^{p}(|x|^{A-2}*\rho)\,dx.
\end{align}
Using the Hardy-Littlewood-Sobolev inequality, see \cite[pp. 106]{LieLo01}, we know that
\begin{eqnarray}\label{HLSineq}
\dint_{\mathbb{R}^n\times \mathbb{R}^n}\frac{\rho^{p}(x)\rho(y)}{|x-y|^{2-A}}\,dxdy\leq C_{HLS}\|\rho^p\|_{L^r}\|\rho\|_{L^s},
\end{eqnarray}
where $\frac{1}{r}+\frac{2-A}{n}+\frac{1}{s}=2$ and the constant $C_{HLS}$ has an upper bound
 \begin{align}\label{chls11}
C_{HLS}&\leq \frac{n}{n-2+A}\left(\frac{|S^{n-1}|}{n}\right)^{\frac{2-A}{n}}\frac{1}{sr}
\left(\left(\frac{2-A}{n(1-1/s)}\right)^{\frac{2-A}{n}}+\left(\frac{2-A}{n(1-1/r)}\right)^{\frac{2-A}{n}}\right)\nonumber\\
&=:\bar C(n,A,r,s).
\end{align}
Taking $r=\frac{p+1}{p}$, then
$$
s=\frac{n(p+1)}{(n-2+A)(p+1)+n}>0.
$$
Due to $p>\max\{1,\frac{A-2+n}{2-A}\}$, we have $1<s<p+1$. Hence, we deduce
\begin{eqnarray}\label{3inteineq1}
\|\rho\|_{L^s}\leq \|\rho\|^{1-\theta}_{L^1}\|\rho\|^{\theta}_{L^{p+1}},\quad \theta=\frac{-n+(2-A)(p+1)}{np}.
\end{eqnarray}
Thus (\ref{3timeder111}), (\ref{HLSineq}) and (\ref{3inteineq1}) show that
\begin{align}\label{3timeder2}
\frac{d}{dt}\dint_{\mathbb{R}^n} \rho^{p} \,dx
\leq&-\dfrac{4mp(p-1)}{(m+p-1)^2}\dint_{\mathbb{R}^n}|\nabla\rho^{\frac{m+p-1}{2}}|^2 dx \nonumber\\
&+(p-1)(A-2+n)C_{HLS}\|\rho\|^{1-\theta}_{L^1}\|\rho\|^{p+\theta}_{L^{p+1}}.
\end{align}
Using that $A>2-n$ and the definition of $\theta$ in (\ref{3inteineq1}), we can get $1<p+\theta<p+1$. Notice that
\begin{eqnarray}\label{y22b}
\|\rho\|_{L^{p+1}}^{p+\theta}
\leq\|\rho\|_{L^{1}}^{\theta_{1}(p+\theta)}~(S_{n}^{-\frac{1}{2}}
\|\nabla\rho^{\frac{m+p-1}{2}}\|_{L^{2}})^{2(p+\theta)~\frac{1-\theta_{1}}{m+p-1}},
\end{eqnarray}
where $\theta_{1}=\frac{n(m+p-1)-(p+1)(n-2)}{(p+1)(n(m+p-1)-(n-2))}$.

Due to $A<2$ and $m>1-\frac{A}{n}$, we know $\frac{(1-\theta_{1})(p+\theta)}{m+p-1}<1$. Hence using (\ref{y22b}) and the Young inequality, we deduce that
\begin{align}\label{rightb}
C(A,n)p^2\|\rho\|^{1-\theta}_{L^1}\|\rho\|^{p+\theta}_{L^{p+1}}
\leq&~C(\sigma_{1})C(A,n)^{q_{1}}p^{2q_{1}}~\|\rho\|_{L^{1}}^{(1-\theta+\theta_{1}(p+\theta))q_{1}}~ S_{n}^{-\frac{(p+\theta)(1-\theta_{1})q_{1}}{m+p-1}}\nonumber\\
&+\sigma_{1}\|\nabla\rho^{\frac{m+p-1}{2}}\|_{L^{2}}^{\frac{2(p+\theta)(1-\theta_{1})}{m+p-1}~ q_{2}}
\end{align}
for any $\sigma_1>0$, $C(\sigma_{1})=(\sigma_{1}q_{1})^{-\frac{q_{1}}{q_{2}}}\cdot q_{1}^{-1}$, and $q_{1},q_{2}>1$ satisfy $\frac{1}{q_{1}}+\frac{1}{q_{2}}=1$.

Given $0<C_{1}<\frac{2mp(p-1)}{(m+p-1)^{2}}$, we set $\sigma_{1}=C_{1}$ and $\frac{2(p+\theta)(1-\theta_{1})q_{2}}{m+p-1}=2$, then by (\ref{3timeder2}) and (\ref{rightb}), we get
\begin{eqnarray}\label{timeder5b}
\frac{d}{dt}{\int_{\mathbb R^{n}}}\rho^{p}dx\leq-C_{1}\|\nabla\rho^{\frac{m+p-1}{2}}\|_{L^{2}}^{2}+C(A,m,n,M_{0},p).
\end{eqnarray}
Hence from (\ref{timeder5b}) and (\ref{inteineq4}), we obtain
\begin{eqnarray*}
\frac{d}{dt}{\int_{\mathbb R^{n}}}\rho^{p}dx\leq-\|\rho\|_{L^{p}}^{p}+C(A,m,n,M_{0},p).
\end{eqnarray*}
A simple computation gives
\begin{eqnarray*}
\|\rho\|_{L^{p}}^{p}\leq\|\rho_{0}\|_{L^{p}}^{p}+C(A,m,n,M_{0},p)\quad  \mbox{ for } p>\max\{1,\frac{A-2+n}{2-A}\}.
\end{eqnarray*}
Together with a simple interpolation inequality, we obtain that the uniform in time $L^p$-bound in time holds for any $p>1$.

For the case $A=2$, then (\ref{3timeder111}) can be written as
\begin{eqnarray*}
\frac{d}{dt}\dint_{\mathbb{R}^n} \rho^{p} \,dx
\leq -\dfrac{4mp(p-1)}{(m+p-1)^2}\dint_{\mathbb{R}^n}|\nabla\rho^{\frac{m+p-1}{2}}|^2 dx +n(p-1)M_0\dint_{\mathbb{R}^n}\rho^{p}\,dx.
\end{eqnarray*}
Notice that
$$
\|\rho\|^p_{L^p}\leq \|\rho\|^{p(1-\theta)}_{L^1}\|\rho\|^{p\theta}_{L^{p+1}}.
$$
Similarly to (\ref{3timeder2})-(\ref{timeder5b}), we have
\begin{eqnarray*}
\frac{d}{dt}\|\rho\|_{L^{p}}^{p}\leq -\|\rho\|_{L^{p}}^{p} +C(A,n,m,p,M_0).
\end{eqnarray*}
Solving the ordinary differential inequality, we obtain the uniform in time $L^p$-bound for the case $A=2$.
\vskip 6pt

{\bf Step 2.-} In this step, we now prove the following iterative inequality
\begin{eqnarray}\label{3diedai}
\frac{d}{dt}\| \rho\|_{L^{p_k}}^{p_k}
\leq-\|\rho\|^{p_k}_{L^{p_k}} +\tilde C p_k^{2\nu_2}\left(\left(\| \rho\|_{L^{p_{k-1}}}^{p_{k-1}}\right)^{\eta_1}+\left(\| \rho\|_{L^{p_{k-1}}}^{p_{k-1}}\right)^{\eta_2}\right),
\end{eqnarray}
where $0<\eta_1,\eta_2\leq 2$, $1<\nu_2\leq n+1$, $p_k=2^k+\frac{n}{2-A}+n$ ($k=0,1,2,...$), and $\tilde C$ is a constant independent of $p_k$.

In fact, taking $r=\frac{p_k+1}{p_k}$ and $s=\frac{n(p_k+1)}{(n-2+A)(p_k+1)+n}$ in (\ref{HLSineq}), a similar procedure to prove (\ref{3timeder2}) gives that
\begin{eqnarray}\label{3timeder2d}
\frac{d}{dt}\|\rho\|_{L^{p_k}}^{p_k}\leq -2C_1\dint_{\mathbb{R}^n}|\nabla\rho^{\frac{m+p_k-1}{2}}|^2 dx +(p_k-1)(A-2+n)C_{HLS}\|\rho\|^{1-\theta}_{L^1}\|\rho\|^{p_k+\theta}_{L^{p_k+1}},
\end{eqnarray}
where $\theta =\frac{(2-A)(p_k+1)-n}{np_k}$, $0<C_1<\frac{2mp_k(p_k-1)}{(m+p_k-1)^2}$ is a constant independent of $p_k$. Plugging $r,s$ into (\ref{chls11}), and after some tedious computations, we deduce
\begin{eqnarray*}
C_{HLS}\leq \bar C\Big(n,A,\frac{p_k+1}{p_k},\frac{n(p_k+1)}{(n-2+A)(p_k+1)+n}\Big)\leq C(n,A)(p_k+1).
\end{eqnarray*}
Hence from (\ref{3timeder2d}), we have
\begin{eqnarray*}
\frac{d}{dt}\|\rho\|_{L^{p_k}}^{p_k}\leq -2C_1\dint_{\mathbb{R}^n}|\nabla\rho^{\frac{m+p_k-1}{2}}|^2 dx +(A-2+n)p_k^2\|\rho\|^{1-\theta}_{L^1}\|\rho\|^{p_k+\theta}_{L^{p_k+1}}.
\end{eqnarray*}
Using again the interpolation inequality and the Sobolev inequality, we obtain
\begin{eqnarray*}
\|\rho\|_{p_k+1}\leq\|\rho\|_{L^{p_{k-1}}}^{\theta_1}\|\rho\|^{1-\theta_1}_{L^{\frac{(m+p_k-1)n}{n-2}}}\leq \|\rho\|_{L^{p_{k-1}}}^{\theta_1}\left(S_n^{-\frac{1}{2}}\|\nabla \rho^{\frac{m+p_k-1}{2}}\|_{L^2}\right)^{\frac{2(1-\theta_1)}{m+p_k-1}},
\end{eqnarray*}
where
\begin{eqnarray*}
\theta_1=\frac{\frac{1}{p_k+1}-\frac{n-2}{n(m+p_k-1)}}{\frac{1}{p_{k-1}}-\frac{n-2}{n(m+p_k-1)}},
\quad 1-\theta_1=\frac{\frac{1}{p_{k-1}}-\frac{1}{p_k+1}}{\frac{1}{p_{k-1}}-\frac{n-2}{n(m+p_k-1)}}.
\end{eqnarray*}
Noticing $0<p_k+\theta <p_k+1$ due to $2-n<A\leq 2$, hence we easily compute that
$$
(1-\theta_1)\frac{p_k+\theta}{m+p_k-1}<1,\quad \mbox{ due to } m>1-\frac{2}{n},~~p_k=2^k+\frac{n}{2-A}+n.
$$
Therefore, the Young inequality implies that
\begin{align}\label{3inteineq3d}
(A-2+n)p_k^2\|\rho\|^{1-\theta}_{L^1}\|\rho\|^{p_k+\theta}_{L^{p_k+1}}\leq&\, C_1(\sigma_1)((A-2+n)\|\rho\|^{1-\theta}_{L^1}p_k^2)^{\nu_2}S_n^{-\frac{\nu_2(p_k+\theta)(1-\theta_1)}{m+p_k-1}}\|\rho\|_{L^{p_{k-1}}}^{\nu_2 (p_k+\theta)\theta_1},\nonumber \\
&+\sigma_1 \|\nabla \rho^{\frac{m+p_k-1}{2}}\|_{L^2}^{\frac{2(p_k+\theta)(1-\theta_1)}{m+p_k-1}\nu_1},
\end{align}
for any $\sigma_1>0$, with $C_1(\sigma_1)=(\sigma_1 \nu_1)^{-\frac{\nu_2}{\nu_1}}\nu_2^{-1}$,
where $\nu_1, \nu_2>1$ satisfy $\tfrac{1}{\nu_1}+\tfrac{1}{\nu_2}=1$. Setting
\begin{eqnarray}\label{3sigma}
\frac{2(p_k+\theta)(1-\theta_1)}{m+p_k-1}\nu_1=2 \qquad \mbox{and} \qquad
\sigma_1=C_1\,,
\end{eqnarray}
we can compute $\nu_2=\frac{m+p_k-1}{(m+p_k-1)-(1-\theta_1)(p_k+\theta)}<n+1$.
Hence, (\ref{3timeder2d}), (\ref{3inteineq3d}) and (\ref{3sigma}) imply
\begin{align}
\frac{d}{dt}\|\rho\|_{L^{p_k}}^{p_k} \leq &\, -C_1 \dint_{\mathbb{R}^n} |\nabla\rho^{\frac{m+p_k-1}{2}}|^2 dx\nonumber \\
& + C_1(\sigma_1)(C(A,B,n,M_0)p_k^2)^{\nu_2} S_n^{-\frac{\nu_2(p_k+\theta)(1-\theta_1)}{m+p_k-1}} \left(\|\rho\|^{p_{k-1}}_{L^{p_{k-1}}}\right)^{\eta_2},
\label{3timeder3dbis}
\end{align}
where $\eta_2:=\frac{\nu_2 (p_k+\theta)\theta_1}{p_{k-1}}\leq 2$.
Thus, from (\ref{3timeder3dbis}) and (\ref{inteineq4d}) we deduce that
\begin{align}\label{3timeder4d}
\frac{d}{dt}\|\rho\|_{L^{p_k}}^{p_k}\leq& ~-\|\rho\|_{L^{p_k}}^{p_k} +C(\sigma_1) S_n^{-\frac{\ell_2p_k(1-\theta_2)}{m+p_k-1}}\left(\|\rho\|^{p_{k-1}}_{L^{p_{k-1}}}\right)^{\eta_1}\nonumber\\
&+C_1(\sigma_1)(C(A,B,n,M_0)p_k^2)^{\nu_2} S_n^{-\frac{\nu_2(p_k+\theta)(1-\theta_1)}{m+p_k-1}}\left(\|\rho\|^{p_{k-1}}_{L^{p_{k-1}}}\right)^{\eta_2},
\end{align}
where $\theta_2$, $\sigma_1$ and $\ell_2$ are same to that in (\ref{theta2}) and (\ref{sigma}), $\eta_1:=\frac{\ell_2 p_k\theta_2}{p_{k-1}}\leq2$.
Noticing that
$$
C(\sigma_1)S_n^{-\frac{\ell_2p_k(1-\theta_2)}{m+p_k-1}}\to C(m,n,A,B,M_0),~~\mbox{ as } p_k\to\infty,
$$
and
$$
C_1(\sigma_1)(C(A,B,n,M_0))^{\nu_2} S_n^{-\frac{\nu_2(p_k+\theta)(1-\theta_1)}{m+p_k-1}}\to C(m,n,A,B,M_0),~~\mbox{ as } p_k\to\infty,
$$
hence there is a constant $\tilde C:=\tilde C(m,n,A,B,M_0)>1$ independent of $p_k$ such that
$$
\max\{C(\sigma_1)S_n^{-\frac{\ell_2p_k(1-\theta_2)}{m+p_k-1}},C_1(\sigma_1)(C(A,B,n,M_0))^{\nu_2} S_n^{-\frac{\nu_2(p_k+\theta)(1-\theta_1)}{m+p_k-1}}\}\leq \tilde C.
$$
Hence, (\ref{3timeder4d}) implies that (\ref{3diedai}) holds.

\vskip 6pt

{\bf Step 3.-} In this step, we will use the iterative inequality (\ref{3diedai}) to prove the $L^{\infty}$-bound of $\rho$ by induction.
In (\ref{3lpestimate}) at Step 1, we take $p=p_0=1+\frac{n}{2-A}+n$, then it holds
\begin{eqnarray}\label{3lp0estimate}
\|\rho\|^{p_0}_{L^{p_0}}\leq \|\rho_0\|^{p_0}_{L^{p_0}}+C(A,B,n,m,M_0)\leq \bar C,
\end{eqnarray}
where $\bar C>1$ is a constant independent of $p_k$ and time $t$.

Let $y_k(t):=\|\rho\|^{p_k}_{L^{p_k}}$, solving the iterative inequality (\ref{3diedai}), we obtain
\begin{align}\label{3inequa4}
\big(e^t y_k(t)\big)'\leq&~ \tilde C p_k^{2\nu_2}\left(y_{k-1}^{\eta_2}+y_{k-1}^{\eta_2}\right)e^t\nonumber\\
\leq& ~2\tilde C \left(2n\frac{3-A}{2-A}\right)^{2(n+1)}2^{2(n+1)k}\max\{1,\sup_{t\geq 0}y_{k-1}^2(t)\}e^t,
\end{align}
where the last inequality used the facts $p_k=2^k+\frac{n}{2-A}+n\leq 2n\frac{3-A}{2-A}2^k$, $0<\eta_1,\eta_2\leq 2$ and $1<\nu_2\leq n+1$.
Let $a_k:=2\tilde C \left(2n\frac{3-A}{2-A}\right)^{2(n+1)}2^{2(n+1)k}>1$.
By (\ref{3inequa4}) and (\ref{inequa5}), we obtain
\begin{align}\label{3iterative}
y_k(t)\leq&~ a_k\max\{1,\sup_{t\geq 0}y_{k-1}^2(t)\}(1-e^{-t})+y_k(0)e^{-t}\nonumber\\
\leq& ~2 a_k\max\{1,\sup_{t\geq 0}y_{k-1}^2(t),D^{2^k}\}.
\end{align}
From (\ref{3iterative}) after some iterative steps, we have
\begin{align*}
y_k(t)\leq& ~2a_k(2a_{k-1})^2(2a_{k-2})^{2^2}\cdot\cdot\cdot(2a_1)^{2^{k-1}}\max\{\sup_{t\geq 0}y_0^{2^k}(t),D^{2^{k}}\}\nonumber\\
=& ~\left(2\tilde C \left(2n\frac{3-A}{2-A}\right)^{2(n+1)}\right)^{2^{k}-1} (2^{2(n+1)})^{2\cdot2^k-k-2}\max\{\sup_{t\geq 0}y_0^{2^k}(t),D^{2^{k}}\}.
\end{align*}
Taking the power $1/p_k$ to above inequality, then we conclude that
\begin{eqnarray}\label{3yk}
\|\rho\|_{L^{p_k}}
\leq 2\left(2n\frac{3-A}{2-A}\right)^{2(n+1)} 2^{4(n+1)}\tilde C  \max\{\sup_{t\geq 0}y_0(t),D\}.
\end{eqnarray}
So, (\ref{3lp0estimate}) and (\ref{3yk}) imply that $\|\rho\|_{L^{\infty}}$ is uniformly bounded in time.

\end{proof}


\section{$L^{\infty}$-uniform bound for the strong singularity case}

In this section, we deduce the $L^{\infty}$-uniform bound of the solutions to the model (\ref{model}) with $-n<B<2-n\leq A\leq 2$ for the attractive-repulsive $\lambda>0$. The potential $\frac{|x|^A}{A}$ has a well-defined distributional Laplacian as given in the first part of the formula \eqref{caseAxiao2}:
\begin{equation}\label{4caseAxiaodeng2}
\Delta \Big(\frac{|x|^A}{A}*\rho\Big)=\left\{
\begin{array}{ccl}
(A-2+n)|x|^{A-2}*\rho & , & \mbox{for $2-n< A\leq 2$}\\[3mm]
n\alpha_n\rho(x) & , & \mbox{for $A=2-n$}
\end{array}
\right. \,.
\end{equation}
However, the Laplacian of the repulsive part ${|x|^B}/{B}$ does not make sense for $-n<B<2-n$. Hence we utilize a fractional Laplace operator to deal with it.  Let the notation $L_s = (-\Delta)^s$ with $0 < s < 1$ for the fractional
powers of the Laplace operator define on smooth functions in $\mathbb{R}^n$ by Fourier transform
and it is extended in a natural way to functions in the Sobolev space $H^{2s} (\mathbb{R}^n)$.
The inverse operator is denoted by $\mathcal{K}_s =(-\Delta)^{-s}$ and can be realized
by convolution
\begin{eqnarray}
\mathcal{K}_s u=K_s*u,\quad K_s=C(n,s)|x|^{-(n-2s)}.\label{koperater}
\end{eqnarray}
Hence taking $s=\frac{B+n}{2}$ in (\ref{koperater}), we have
\begin{eqnarray}
\frac{|x|^{B}}{B}*u=\frac{1}{B}C\left(n,\frac{B+n}{2}\right)(-\Delta)^{-\frac{B+n}{2}} u.\label{Bopform}
\end{eqnarray}
An important tool in this case is the Stroock-Varopoulos inequality for the fractional Laplace operator $L_s = (-\Delta)^s$, $0 < s < 1$, see \cite{CV} and the references therein,
\begin{eqnarray}\label{SV}
\dint_{\mathbb{R}^n}|\rho|^{\gamma-2}\rho(-\Delta)^{\frac{\alpha}{2}} \rho\,dx\geq \frac{4(\gamma-1)}{\gamma^2}\dint_{\mathbb{R}^n}|(-\Delta)^{\frac{\alpha}{4}} |\rho|^{\frac{\gamma}{2}}|^2\,dx,
\end{eqnarray}
where $0<\alpha<2$, $\gamma>2$. We will take the advantage of the repulsive part in this section in order to compensate the attractive nonlocal part by means of the fractional Sobolev inequality \cite{CT}. As in Section 2, we will take $\lambda=1$ in the proof of the next result without loss of generality.

\begin{thm}\label{th4.1}
(Space-Time uniform $L^{\infty}$-bound)
 Assume that the initial data $\rho_0\in L^{1}_+\cap L^{\infty}(\mathbb{R}^n)$, $-n<B<2-n\leq A\leq 2$ and the diffusion exponent $m>1-\frac{2}{n}$. If $\rho$ is a global weak solution, then it satisfies a uniform in time $L^{\infty}$-bound
$$
\|\rho\|_{L^{\infty}(\mathbb{R}_+; L^{\infty}(\mathbb{R}^n))}\leq C\,,
$$
for all $\lambda> 0$.
\end{thm}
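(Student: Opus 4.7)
I adopt the three-step strategy of Theorems~\ref{th2.1}, \ref{th3.1} and \ref{th3.1b}; the novelty here is that the strong singularity of the repulsive part generates, via the Stroock--Varopoulos inequality \eqref{SV}, a fractional-order dissipation of $\rho^{(p+1)/2}$ that replaces the role of $\|\nabla\rho^{(m+p-1)/2}\|_{L^{2}}^{2}$ in the earlier theorems. Multiplying \eqref{model} by $p\rho^{p-1}$, integrating over $\RR^{n}$, and decomposing the nonlocal term via \eqref{4caseAxiaodeng2}--\eqref{Bopform} with $s:=(B+n)/2\in(0,1)$, the repulsive contribution to $\Delta(U*\rho)$ is a negative multiple of $(-\Delta)^{1-s}\rho$. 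Applying \eqref{SV} with $\gamma=p+1$ and $\alpha=2(1-s)$ then converts it into a genuine dissipation and yields
$$\frac{d}{dt}\int_{\RR^{n}}\rho^{p}\,dx\leq -c_{B}\frac{4p(p-1)}{(p+1)^{2}}\bigl\|(-\Delta)^{(1-s)/2}\rho^{(p+1)/2}\bigr\|_{L^{2}}^{2}+(p-1)\int_{\RR^{n}}\rho^{p}\,\Delta\!\left(\tfrac{|x|^{A}}{A}*\rho\right)dx,$$
where $c_{B}=c_{B}(n,B)>0$ comes from \eqref{Bopform} and the sign $B<0$.

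\textbf{Step 1 (uniform $L^{p}$-bound for fixed $p>1$).} By \eqref{4caseAxiaodeng2}, the remaining aggregation term equals $(p-1)n\alpha_{n}\|\rho\|_{L^{p+1}}^{p+1}$ when $A=2-n$, and $(p-1)(A-2+n)\int\rho^{p}(|x|^{A-2}*\rho)\,dx$ when $2-n<A\leq 2$. In the latter case, the rearrangement/splitting argument of Theorem~\ref{th3.1} (or the HLS estimate of Theorem~\ref{th3.1b}) reduces the convolution integral to $C(A,n,M_{0})\|\rho\|_{L^{p+1}}^{p+1}$ plus lower-order terms. To absorb $\|\rho\|_{L^{p+1}}^{p+1}$ I invoke the fractional Sobolev inequality of \cite{CT} applied to $\rho^{(p+1)/2}$,
$$\|\rho\|_{L^{q^{*}}}^{(p+1)/2}\leq C_{n,s}\bigl\|(-\Delta)^{(1-s)/2}\rho^{(p+1)/2}\bigr\|_{L^{2}},\qquad q^{*}:=\frac{(p+1)n}{n-2+2s}>p+1,$$
combined with the interpolation $\|\rho\|_{L^{p+1}}\leq\|\rho\|_{L^{1}}^{\beta}\|\rho\|_{L^{q^{*}}}^{1-\beta}$ for some $\beta=\beta(p)\in(0,1)$. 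Raising to the $(p+1)$-th power and applying Young's inequality, whose exponent $2(1-\beta)<2$ on the fractional seminorm, absorbs the aggregation integral into the fractional dissipation with a strictly smaller prefactor; a second copy of the same interpolation/Young pair produces an extra $-\|\rho\|_{L^{p}}^{p}$ on the right-hand side. Gronwall's inequality then delivers the uniform-in-time $L^{p}$-bound for every fixed $p>1$.

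\textbf{Steps 2--3 (iteration and bootstrap).} Replacing $p$ by $p_{k}=2^{k}+p_{0}$ for a suitable base exponent $p_{0}$, and interpolating between $L^{p_{k-1}}$ and $L^{(p_{k}+1)n/(n-2+2s)}$ instead of between $L^{1}$ and $L^{q^{*}}$, the same argument produces a recursive differential inequality
$$\frac{d}{dt}\|\rho\|_{L^{p_{k}}}^{p_{k}}\leq-\|\rho\|_{L^{p_{k}}}^{p_{k}}+\tilde C\, p_{k}^{\ell}\bigl(\|\rho\|_{L^{p_{k-1}}}^{p_{k-1}}\bigr)^{\eta}$$
with $\tilde C$ independent of $k$, $1<\ell\leq n+1$, and $0<\eta<2$, precisely in the format of \eqref{diedai} and \eqref{3diedai}. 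The Moser-type bootstrap from Step~3 of Theorem~\ref{th2.1} (see \eqref{inequa4}--\eqref{yk}) then gives $\|\rho\|_{L^{p_{k}}}\leq C$ uniformly in $k$ and $t$, and letting $k\to\infty$ yields the desired $L^{\infty}$ bound. The main technical obstacle I anticipate is the uniform-in-$p_{k}$ control of the constants in this iterative inequality: as in \eqref{ell}--\eqref{csigma}, I must verify that the Stroock--Varopoulos prefactor $4p_{k}(p_{k}-1)/(p_{k}+1)^{2}\to 4$, the $p_{k}$-independent fractional Sobolev constant of \cite{CT}, and the interpolation exponent $\beta_{k}=\beta(p_{k})\to 1$ combine to keep $\tilde C$ bounded and force $\eta<2$ and $\ell\leq n+1$. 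Given the analogous bookkeeping carried out in Sections~2 and~3, this amounts to a careful tracking of exponents.
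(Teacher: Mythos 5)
Your proposal matches the paper's proof in every essential respect: the decomposition of $\Delta(U*\rho)$ via \eqref{4caseAxiaodeng2}--\eqref{Bopform}, the Stroock--Varopoulos inequality \eqref{SV} with $\gamma=p+1$, $\alpha=2-n-B$ to turn the repulsive term into a negative fractional dissipation, the rearrangement/splitting bound from Theorem~\ref{th3.1} to reduce the attractive convolution to $n\alpha_n\|\rho\|_{L^{p+1}}^{p+1}$ plus a lower-order term, the fractional Sobolev inequality of \cite{CT} applied to $\rho^{(p+1)/2}$ with interpolation and Young to absorb that $L^{p+1}$ term into the fractional dissipation, the second absorption using the standard diffusion gradient to produce $-\|\rho\|_{L^p}^p$, and finally the Moser-type bootstrap. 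This is precisely the paper's route. Two small inaccuracies, neither fatal: (1) your claimed bound $1<\ell\leq n+1$ on the Young exponent in the iterative inequality is not correct in general; the exponent arising from the fractional Sobolev absorption satisfies $q_1<\tfrac{n}{2-n-B}+1$, which exceeds $n+1$ when $1-n<B<2-n$ (the paper records $\nu_2\leq\max\{n+1,\tfrac{n}{2-n-B}+1\}$). Since this bound is still $p_k$-independent, the iteration goes through unchanged, but the numerical constant in the final $L^\infty$ bound must be adjusted. (2) The rearrangement/splitting device only applies for $2-n<A<2$; at $A=2$ the attractive term is simply $2n\,M_0\int\rho^p$ and requires no rearrangement, so you should carve out $A=2$ as a separate, trivially simpler case, exactly as the paper does.
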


\begin{proof}
{\bf Step 1.-} We first prove a uniform in time $L^p$-bound.
Taking $p\rho^{p-1}$, $p>1$ as test function in (\ref{model}), we have
\begin{eqnarray}\label{4timeder1nn}
\frac{d}{dt}\dint_{\mathbb{R}^n} \rho^{p} \,dx
=-\dfrac{4mp(p-1)}{(m+p-1)^2}\dint_{\mathbb{R}^n}|\nabla\rho^{\frac{m+p-1}{2}}|^2 dx +(p-1)\dint_{\mathbb{R}^n}\rho^{p}\Delta (U*\rho)\,dx.
\end{eqnarray}
Now, we focus on the last term in the right-hand side of \eqref{4timeder1nn}.
For the case $2-n<A< 2$, from (\ref{4caseAxiaodeng2}) and (\ref{Bopform}), we rewrite it as
\begin{align}\label{4rightnn}
\dint_{\mathbb{R}^n}\rho^{p}\Delta (U*\rho)\,dx=&~\dint_{\mathbb{R}^n}\rho^{p}(x)(A-2+n)|x|^{A-2}*\rho(x)\,dx\nonumber\\
&+\dint_{\mathbb{R}^n}\rho^{p}(x)\Delta\left(-C\left(n,\frac{n+B}{2}\right)\frac{1}{B}(-\Delta)^{-\frac{B+n}{2}} \rho\right)\,dx\nonumber\\
=&~(A-2+n)\dint_{\mathbb{R}^n}\rho^{p}(x)|x|^{A-2}*\rho(x)\,dx\nonumber\\
&+C\left(n,\frac{n+B}{2}\right)\frac{1}{B}\dint_{\mathbb{R}^n}\rho^{p}(x)(-\Delta)^{1-\frac{B+n}{2}} \rho\,dx.
\end{align}
Using the Stroock-Varopoulos inequality (\ref{SV}) with $\gamma=p+1$, $\alpha=2-n-B$, then the repulsive term is negative
\begin{eqnarray}\label{4inequal1nn}
\frac{1}{B}\dint_{\mathbb{R}^n}\rho^{p}(x)(-\Delta)^{1-\frac{B+n}{2}} \rho\,dx\leq \frac{1}{B}\frac{4p}{(p+1)^2}\dint_{\mathbb{R}^n}|(-\Delta)^{\frac{2-n-B}{4}} \rho^{\frac{p+1}{2}}|^2\,dx< 0,
\end{eqnarray}
since $B<0$. Therefore, from (\ref{4timeder1nn}), (\ref{4rightnn}) and (\ref{4inequal1nn}), we obtain
\begin{align}\label{4timeder2nn}
\frac{d}{dt}\dint_{\mathbb{R}^n} \rho^{p} \,dx
\leq&-\dfrac{4mp(p-1)}{(m+p-1)^2}\dint_{\mathbb{R}^n}|\nabla\rho^{\frac{m+p-1}{2}}|^2 dx \nonumber\\ &+(p-1)(A-2+n)\dint_{\mathbb{R}^n}\dint_{\mathbb{R}^n}\frac{\rho^{p}(x)\rho(y)}{|x-y|^{2-A}}\,dydx\nonumber\\
&+C\left(n,\frac{n+B}{2}\right)\frac{1}{B}\frac{4p(p-1)}{(p+1)^2}\dint_{\mathbb{R}^n}|(-\Delta)^{\frac{2-n-B}{4}} \rho^{\frac{p+1}{2}}|^2\,dx.
\end{align}
Using the same method as (\ref{3timeder1new})-(\ref{3timeder1new1}), we can still obtain that
$$
(A-2+n)\dint_{\mathbb{R}^n}\dint_{\mathbb{R}^n}\frac{\rho^{p}(x)\rho(y)}{|x-y|^{2-A}}\,dydx\leq n\alpha_n\dint_{\mathbb{R}^n}\rho^{p+1}\,dx+C(M_0,A,n)\dint_{\mathbb{R}^n}\rho^p\,dx.
$$
Hence (\ref{4timeder2nn}) can be written as
\begin{align}\label{4timeder2nnn}
\frac{d}{dt}\dint_{\mathbb{R}^n} \rho^{p} \,dx
\leq&-\dfrac{4mp(p-1)}{(m+p-1)^2}\dint_{\mathbb{R}^n}|\nabla\rho^{\frac{m+p-1}{2}}|^2 dx \nonumber\\ &+(p-1)n\alpha_n\dint_{\mathbb{R}^n}\rho^{p+1}\,dx+(p-1)C(M_0,A,n)\dint_{\mathbb{R}^n}\rho^p\,dx\nonumber\\
&+C\left(n,\frac{n+B}{2}\right)\frac{1}{B}\frac{4p(p-1)}{(p+1)^2}\dint_{\mathbb{R}^n}|(-\Delta)^{\frac{2-n-B}{4}} \rho^{\frac{p+1}{2}}|^2\,dx.
\end{align}
Using the interpolation inequality and the fractional Sobolev inequality \cite{CT}, we deduce that
\begin{align}\label{p+1}
\|\rho\|^{p+1}_{L^{p+1}}&\leq M_0^{(1-\theta)(p+1)}\|\rho\|^{(p+1)\theta}_{L^{\frac{n(p+1)}{2n-2+B}}} = M_0^{(1-\theta)(p+1)}\|\rho^{\frac{p+1}{2}}\|^{2\theta}_{L^{\frac{2n}{2n-2+B}}}\nonumber\\
&\leq S\left(n,\frac{2-n-B}{2}\right)^{\theta} M_0^{(1-\theta)(p+1)}\|(-\Delta)^{\frac{2-n-B}{4}}\rho^{\frac{p+1}{2}}\|^{2\theta}_{L^{2}},
\end{align}
where $\theta=\frac{pn}{pn+2-n-B}$, and $S(n,s)$ is given by the following exact form
$$
S(n,s)=2^{-2s}\pi^{-s}\frac{\Gamma(\frac{n-2s}{2})}{\Gamma(\frac{n+2s}{2})}\left[\frac{\Gamma(n)}{\Gamma(\frac{n}{2})}\right]^{2s/n}.
$$

Making use of the Young inequality and the formula (\ref{4timeder2nnn})-(\ref{p+1}), we infer that for any $\sigma_0>0$, it holds that
\begin{align*}
\frac{d}{dt}\dint_{\mathbb{R}^n} \rho^{p} \,dx
\leq&-\dfrac{4mp(p-1)}{(m+p-1)^2}\dint_{\mathbb{R}^n}|\nabla\rho^{\frac{m+p-1}{2}}|^2 dx +\sigma_0\dint_{\mathbb{R}^n}|(-\Delta)^{\frac{2-n-B}{4}} \rho^{\frac{p+1}{2}}|^2\,dx\nonumber\\
 &+C\left(n,\frac{n+B}{2}\right)\frac{1}{B}\frac{4p(p-1)}{(p+1)^2}\dint_{\mathbb{R}^n}|(-\Delta)^{\frac{2-n-B}{4}} \rho^{\frac{p+1}{2}}|^2\,dx\nonumber\\
 &+(p-1)C(M_0,A,n)\dint_{\mathbb{R}^n}\rho^p\,dx+C(p,n,B,\sigma_0).
 \end{align*}
Taking
$$
\sigma_0=-C\left(n,\frac{n+B}{2}\right)\frac{1}{B}\frac{4p(p-1)}{(p+1)^2},
$$
we have
\begin{align*}
\frac{d}{dt}\dint_{\mathbb{R}^n} \rho^{p} \,dx
\leq&-\dfrac{4mp(p-1)}{(m+p-1)^2}\dint_{\mathbb{R}^n}|\nabla\rho^{\frac{m+p-1}{2}}|^2 dx\\
& +(p-1)C(M_0,A,n)\dint_{\mathbb{R}^n}\rho^p\,dx+C(p,n,B,\sigma_0).
 \end{align*}
Using the same procedure as (\ref{inteineq1})-(\ref{inteineq4}), we get
\begin{eqnarray*}
\frac{d}{dt}\dint_{\mathbb{R}^n} \rho^{p} \,dx
\leq -\|\rho\|^p_{L^p} +C(A,B,n,m,M_0,p).
\end{eqnarray*}
A simple computation shows that
\begin{eqnarray*}
\|\rho\|^p_{L^p}\leq \|\rho_0\|^p_{L^p}+C(A,B,n,m,M_0,p),
\end{eqnarray*}
for any fixed $p>1$, showing that the $L^p$-norm of $\rho$ is uniformly bounded in time.

For the case $A=2$, from (\ref{4inequal1nn}) and (\ref{4timeder2nn}) it can be obtained that
\begin{eqnarray*}
\frac{d}{dt}\dint_{\mathbb{R}^n} \rho^{p} \,dx
\leq -\dfrac{4mp(p-1)}{(m+p-1)^2}\dint_{\mathbb{R}^n}|\nabla\rho^{\frac{m+p-1}{2}}|^2 dx +n(p-1)M_0\dint_{\mathbb{R}^n}\rho^{p}\,dx,
\end{eqnarray*}
which implies the uniform in time $L^p$-bound for the case $A=2$.

For the case $A=2-n$, from (\ref{4caseAxiaodeng2}) and (\ref{4timeder1nn}) it can be obtained that
\begin{align*}
\frac{d}{dt}{\int_{\mathbb R^{n}}}\rho^{p}dx
\leq&-\frac{4mp(p-1)}{(m+p-1)^{2}}\int_{\mathbb R^{n}}|\nabla\rho^{\frac{m+p-1}{2}}|^{2}dx+(p-1)n\alpha(n)\int_{\mathbb R^{n}}\rho^{p+1}(x)dx\nonumber\\
&+C\left(n,\frac{n+B}{2}\right)\frac{1}{B}\frac{4p(p-1)}{(p+1)^2}\dint_{\mathbb{R}^n}|(-\Delta)^{\frac{2-n-B}{4}} \rho^{\frac{p+1}{2}}|^2\,dx\,.
\end{align*}
Utilizing (\ref{p+1}) and the Young inequality, we derive
\begin{eqnarray*}
\frac{d}{dt}\dint_{\mathbb{R}^n} \rho^{p} \,dx
\leq -\dfrac{4mp(p-1)}{(m+p-1)^2}\dint_{\mathbb{R}^n}|\nabla\rho^{\frac{m+p-1}{2}}|^2 dx +C(n,m,p,M_0,A,B),
\end{eqnarray*}
which implies the uniform in time $L^p$-bound for the case $A=2-n$.

\vskip 6pt

{\bf Step 2.-} In this step, we now prove the following iterative inequality
\begin{eqnarray}\label{3diedaistr}
\frac{d}{dt}\| \rho\|_{L^{p_k}}^{p_k}
\leq-\|\rho\|^{p_k}_{L^{p_k}} +\tilde C p_k^{\nu_2}\left(\left(\| \rho\|_{L^{p_{k-1}}}^{p_{k-1}}\right)^{\eta_1}+\left(\| \rho\|_{L^{p_{k-1}}}^{p_{k-1}}\right)^{\eta_2}\right),
\end{eqnarray}
where $0<\eta_1,\eta_2\leq 2$, $1<\nu_2\leq \max\{n+1,\frac{n}{2-n-B}+1\}$, $p_k=2^k+n$ ($k=0,1,2,...$), and $\tilde C$ is a constant independent of $p_k$.

For the case $2-n<A<2$, taking $p_k\rho^{p_k-1}$, $p_k=2^k+n>1$ as a test function in (\ref{model}) and using the same procedure to obtain (\ref{4timeder2nnn}), we have
\begin{align}\label{Lpstr}
\frac{d}{dt}\dint_{\mathbb{R}^n} \rho^{p_k} \,dx
\leq&-\dfrac{4mp_k(p_k-1)}{(m+p_k-1)^2}\dint_{\mathbb{R}^n}|\nabla\rho^{\frac{m+p_k-1}{2}}|^2 dx \nonumber\\ &+(p_k-1)n\alpha(n)\dint_{\mathbb{R}^n}\rho^{p_k+1}\,dx+(p_k-1)C(M_0,A,n)\dint_{\mathbb{R}^n}\rho^{p_k}\,dx\nonumber\\
&+C\left(n,\frac{n+B}{2}\right)\frac{1}{B}\frac{4p_k(p_k-1)}{(p_k+1)^2}\dint_{\mathbb{R}^n}|(-\Delta)^{\frac{2-n-B}{4}} \rho^{\frac{p_k+1}{2}}|^2\,dx\nonumber\\
\leq&-2C_1\dint_{\mathbb{R}^n}|\nabla\rho^{\frac{m+p_k-1}{2}}|^2 dx +C\left(n,\frac{n+B}{2}\right)\frac{1}{2B}\dint_{\mathbb{R}^n}|(-\Delta)^{\frac{2-n-B}{4}} \rho^{\frac{p_k+1}{2}}|^2\,dx\nonumber\\
&+C(M_0,A,n)(p_k-1)\dint_{\mathbb{R}^n}\rho^{p_k}\,dx +n\alpha(n)(p_k-1)\dint_{\mathbb{R}^n}\rho^{p_k+1}\,dx ,
\end{align}
where $0<C_1<\frac{2mp_k(p_k-1)}{(m+p_k-1)^2}$ is a constant independent of $p_k$.
Using again the interpolation inequality and the fractional Sobolev inequality \cite{CT}, we obtain
\begin{align*}
\|\rho\|^{p_k+1}_{p_k+1}&\leq\|\rho\|_{L^{p_{k-1}}}^{(p_k+1)\theta_3}\|\rho\|^{(p_k+1)(1-\theta_3)}_{L^{\frac{n(p_k+1)}{2n-2+B}}}\\
&\leq S\Big(n,\frac{2-n-B}{2}\Big)^{1-\theta_3}\|\rho\|_{L^{p_{k-1}}}^{(p_k+1)\theta_3}\|(-\Delta)^{\frac{2-n-B}{4}} \rho^{\frac{p_k+1}{2}}\|_{L^2}^{2(1-\theta_3)},
\end{align*}
where $\theta_3=\frac{\frac{1}{p_k+1}-\frac{2n-2+B}{n(p_k+1)}}{\frac{1}{p_{k-1}}-\frac{2n-2+B}{n(p_k+1)}}\sim O(1)$. Hence the Young inequality implies that for any $\sigma_3>0$
 \begin{align}\label{3inteineq1dstr}
n\alpha_n (p_k-1)\|\rho\|^{p_k+1}_{p_k+1}\leq&\, C(\sigma_3)\left(n\alpha_n S\Big(n,\frac{2-n-B}{2}\Big)^{1-\theta_3}(p_k-1)\|\rho\|_{L^{p_{k-1}}}^{(p_k+1)\theta_3}\right)^{q_1}\nonumber\\
&+\sigma_3 \left(\|(-\Delta)^{\frac{2-n-B}{4}} \rho^{\frac{p_k+1}{2}}\|_{L^2}^{2(1-\theta_3)}\right)^{q_2},
\end{align}
where $C(\sigma_3)=(\sigma_3 q_2)^{-\frac{q_1}{q_2}}q_1^{-1}$,
where $q_1, q_2>1$ satisfy $\tfrac{1}{q_1}+\tfrac{1}{q_2}=1$. Setting
\begin{eqnarray}\label{3sigmastr}
(1-\theta_3)q_2=1 \qquad \mbox{and} \qquad
\sigma_3=-C\left(n,\frac{n+B}{2}\right)\frac{1}{2B}\,,
\end{eqnarray}
we can compute
\begin{eqnarray}\label{q-1}
q_1=\frac{n(p_k+1)-(2n-2+B)p_{k-1}}{(2-B-n)p_{k-1}}<\frac{n}{2-n-B}+1.
\end{eqnarray}
 Therefore using (\ref{Lpstr}), (\ref{3inteineq1dstr}) and (\ref{3sigmastr}), we deduce
\begin{align*}
\frac{d}{dt}\dint_{\mathbb{R}^n} \rho^{p_k} \,dx
\leq&-2C_1\dint_{\mathbb{R}^n}|\nabla\rho^{\frac{m+p_k-1}{2}}|^2 dx +C(M_0,A,n)p_k\dint_{\mathbb{R}^n}\rho^{p_k}\,dx\nonumber \\
&+C(\sigma_3)\left(n\alpha_n S\Big(n,\frac{2-n-B}{2}\Big)^{1-\theta_3}(p_k-1)\|\rho\|_{L^{p_{k-1}}}^{(p_k+1)\theta_3}\right)^{q_1},
\end{align*}
Using the same process as (\ref{inteineq3d})-(\ref{timeder3d'}), we have
\begin{align}\label{Lpstr2}
\frac{d}{dt}\|\rho\|_{L^{p_k}}^{p_k}\leq& -\|\rho\|_{L^{p_k}}^{p_k}+C(\sigma_1)\left(1+(C(A,B,n,M_0)p_k)^{\ell_2}\right) S_n^{-\frac{\ell_2p_k(1-\theta_2)}{m+p_k-1}}\left(\|\rho\|^{p_{k-1}}_{L^{p_{k-1}}}\right)^{\frac{\ell_2 p_k\theta_2}{p_{k-1}}}\nonumber\\
&+C(\sigma_3)\left(n\alpha_n S\Big(n,\frac{2-n-B}{2}\Big)^{1-\theta_3}(p_k-1)\|\rho\|_{L^{p_{k-1}}}^{(p_k+1)\theta_3}\right)^{q_1}.
\end{align}
Notice (\ref{csigma}) and
\begin{align*}
C(\sigma_3)\left(n\alpha_n S\Big(n,\frac{2-n-B}{2}\Big)^{1-\theta_3}\right)^{q_1}\to C(m,n,A,B,M_0),~~\mbox{ as } p_k\to\infty,
\end{align*}
hence there is a constant $\tilde C:=\tilde C(m,n,A,B,M_0)>1$ independent of $p_k$ such that
$$
\max\left\{C(\sigma_1)\left(1+C(A,B,n,M_0)^{\ell_2}\right) S_n^{-\frac{\ell_2p_k(1-\theta_2)}{m+p_k-1}}\!\!\!\!\!,C(\sigma_3)\left(n\alpha_n S\Big(n,\frac{2-n-B}{2}\Big)^{1-\theta_3}\right)^{q_1}\right\}\leq \tilde C.
$$
Taking $1<\nu_2:=\max\{q_1,l_2\}\leq \max\{n+1,\frac{n}{2-n-B}+1\}$ due to (\ref{q-1}) and (\ref{ell}), and using the fact that
$$
\eta_1:=\frac{\ell_2 p_k\theta_2}{p_{k-1}}\leq 2 \mbox{ due to (\ref{ell})},\quad \eta_2:=\frac{p_k+1}{p_{k-1}}\theta_3q_1=\frac{p_k+1}{p_{k-1}}\leq 2,
$$
then (\ref{Lpstr2}) implies that (\ref{3diedaistr}) holds.

It is not difficult to see that the case $A=2-n$ is similar to the case $2-n<A<2$. The inequalities above hold true in the limit case $A=2-n$ with some simplications, and an iterative inequality can be also obtained.

Finally, Step 3 in the proof of Theorem \ref{th3.1b} shows the uniform $L^{\infty}$-bound in time for $2-n\leq A<2$.

The case $A=2$ follows similarly to the first case and the proof of Theorem \ref{th2.1}, an iterative inequality and the uniform $L^{\infty}$-bound of solutions can be analogously obtained.
\end{proof}

\section*{acknowledgements}
{\small JAC was partially supported by the EPSRC grant number EP/P031587/1.
JW is partially supported  by Program for Liaoning Excellent Talents in University (Grant No. LJQ2015041) and Key Project of Education Department of Liaoning Province (Grant No. LZD201701). The authors warmly thank the referees for the very valuable comments and suggestions that allowed us to improve this paper. We also thank Edoardo Mainini for pointing us out an important reference.}

\bibliographystyle{abbrv}
\bibliography{CW}

\end{document}